\newcommand{\FF}{\mathbf F}
\newcommand{\ZZ}{\mathbf Z}
\newcommand{\qedbox}{\rule{2mm}{2mm}}
\def\qedbox{\hbox{\vbox{\hrule\hbox{\vrule\kern3pt\vbox{\kern6pt}\kern3pt\vrule}\hrule}}}%
\theoremstyle{Theorem}
\newtheorem{thm}{Theorem}[section]
\newtheorem{cor}[thm]{Corollary}
\newtheorem{lem}[thm]{Lemma}
\theoremstyle{definition}
\newtheorem{dff}[thm]{Definition}
\newtheorem{xmp}[thm]{Example}
\newtheorem{rmk}[thm]{Remark}
\def\goth{\mathfrak}
\def\HK{\textnormal{HK}}
\newcommand{\f}[1]{[#1]}
\def\dotdiv{\ \hbox{\.{-\!-\!-\ }}}
\def\dotdiv{\, {\begin{picture}(1,1)(-1,-2)\put(2.5,3){\circle*{1.6}}\linethickness{.3mm}\line(1,0){5}\end{picture}}\ \ }
\title{Hilbert-Kunz functions of $2 \times 2$ determinantal rings}
\author{Lance Edward Miller and Irena Swanson}
\subjclass{Primary 13D40; Secondary 13P10, 13H10,  13H15}
\thanks{The first author was partially supported by a National Science Foundation VIGRE Grant,
\#0602219}
\begin{document}

\maketitle 

\begin{abstract}
Let $k$ be an arbitrary field (of arbitrary characteristic) and let $X = [x_{i,j}]$ 
be a generic $m \times n$ matrix of variables. Denote by $I_2(X)$ the ideal in 
$k[X] = k[x_{i,j}: i = 1, \ldots, m; j = 1, \ldots, n]$
generated by the $2 \times 2$ minors of $X$. 
Using Gr\"{o}bner basis
we give a recursive formulation
for the lengths of the $k[X]$-module $k[X]/(I_2(X) + (x_{1,1}^q, \ldots, x_{m,n}^q))$
as $q$ varies over all positive integers.
This is a generalized Hilbert-Kunz function,
and our formulation proves that it is a polynomial function in~$q$.
%We give closed forms for the cases $m \le 2$,
%as well as the closed forms for some other special length functions.
We apply our method to give closed forms for these Hilbert-Kunz functions for cases $m \le 2$.
\end{abstract}

\section{Introduction}

%(See \cite{Hun96} or \cite[Section 8.4]{ST} for more details.)
Let $R$ be a ring of characteristic $p > 0$ and set $q = p^e$. 
For a zero-dimensional ideal $I$ and a finitely generated $R$-module $M$,
the Hilbert-Kunz function $\HK_{M,I}(q)$
is the $R$-module length of $M/I^{\f{q}}M$,
where $I^{\f{q}}$ is the ideal
generated by the $q$-th powers of elements of a generating set of~$I$.
Hilbert-Kunz functions were initially studied by Kunz \cite{Kun76}. 
In contrast with the Hilbert-Samuel function,
which agrees with a polynomial for large input,
the Hilbert-Kunz function is in general not a polynomial function
even asymptotically:
%Despite their obvious analogy with Hilbert-Samuel functions,
%there are some striking dissimilarities.
%Here is a classic example showing one of the differences. 

\begin{xmp}(c.f.\ \cite{HM83})\label{xmp:notpoly}
If $R = \FF_5[[w,x,y,z]]/(w^4+x^4+y^4+z^4)$,
then the characteristic is~$5$,
and for $e \geq 1$,
$\HK_{R,(x,y,z,e)}(5^e) = {168 \over 61}5^{3e} - {107 \over 61}3^e$.
\end{xmp}

Monsky \cite[Theorem 3.10]{Mon83}
showed that $\HK_{M,I}(q) = cq^d + O(q^{d-1})$
for a real constant $c$ and where $d = \dim M$.
The real number $c$ is called the Hilbert-Kunz multiplicity of $M$
with respect to $I$
and is denoted $e_{\HK}(I;M)$.
There is a great deal of computational evidence \cite{Mon08,Mon09}
that the Hilbert-Kunz multiplicities
can take on non-rational algebraic or even transcendental values,
but no such examples have been definitively established.
%Though, unlike Hilbert-Samuel multiplicity, this value was only shown to be a real number and there is a great deal of computational evidence that Hilbert-Kunz multiplicities can taken on algebraic or even transcendental values \cite{Mon08,Mon09}.  
%Hilbert-Kunz multiplicities are particular useful as a measure of the severity of the singularities of $R/I$. In particular, FILL IN. 
The Hilbert-Kunz function has proven difficult to compute in any generality, though there are computations done for explicit examples in small dimensions or special cases \cite{Con96,FT03,Kr07,Mon,MT06,Tei02,Wa}.
Deeper coefficients have only recently been proven to exist in special cases \cite{HY,HMM04}. The well-established 
sensitivity of the Hilbert-Kunz function to the singularities of the underlying space serves as a driving motivation for 
their study. For more details, on Hilbert-Kunz theory see \cite{Hun96} or \cite[Section 8.4]{ST}.

\smallskip
%The setting for this article is more generalized in one aspect
%and more restricted in another:
%we allow $k$ to have arbitrary characteristic,
%we allow $q$ to be an arbitrary integer
%(not just a power of the characteristic),
%and the ring $R$ is the quotient of a polynomial ring
%in $m n$ variables $x_{ij}$ over $k$
%modulo the ideal generated by the $2 \times 2$ minors
%of the generic matrix $[x_{ij}]$.
%This is similar to Eto's work in \cite{Eto02}.
%We continue to denote by $\goth{m}^{\f{q}}$
%the `Frobenius' power $(x_{ij}^q: i, j)$ of the ideal $\goth{m}$.
%The goal of this article is to calculate the {\bf generalized Hilbert-Kunz function}
%$\HK_{R,\goth{m}}(q) = \lambda_R(R/\goth{m}^{\f{q}})$.

The main subject of this article is to compute the {\bf (generalized)
Hilbert-Kunz function} of determinantal varieties of size $2$ minors.
The setting is the quotient of a polynomial ring
in $m \cdot n$ variables $x_{i,j}$ over an arbitrary field $k$
(of arbitrary characteristic)
modulo the ideal $I_2(X)$ generated by the $2 \times 2$ minors
of the generic matrix $[x_{i,j}]$,
and we study the length function
$$
\HK_{k[X]/I_2(X),\goth{m}}(q)
= \lambda\left(\frac{k[X]}{I_2(X)+\goth{m}^{\f{q}}}\right),
$$
where $q$ varies over all non-negative integers,
$\goth{m}^{\f{q}} = (x_{i,j}^q: i, j)$, and $\lambda$ stands for length.
We will simply refer to this function as the Hilbert-Kunz function,
and the corresponding (generalized) Hilbert-Kunz multiplicity 
of $k[X]/I_2(X)$ with respect to $\goth{m}$
equals
$$
e_{\HK} (k[X]/I_2(X);\goth{m})
= \lim_{q \to \infty}
\frac{\HK_{k[X]/I_2(X), \goth{m}}(q)}{q^{m+n-1}}.
$$
We give a recursive formulation of the Hilbert-Kunz function
which enables us to prove 
that it %the (generalized) Hilbert-Kunz function
is a polynomial in $q$ (Corollary~\ref{cor:poly}).
We give closed forms for the Hilbert-Kunz function
and the Hilbert-Kunz multiplicity
when the generic matrix is of size $2 \times n$
(Theorem~\ref{thm:2byn},
Corollary~\ref{corHKmult}).
Both are independent of the characteristic of the field. 
We note that our recursive formulation gives lengths of various
other ideals as well (see discussion below Definition~\ref{def:genbasis}).

Results on the Hilbert-Kunz multiplicity of $k[X]/I_2(X)$
at the maximal ideal of variables are well understood 
by the work of Buchweitz, Chen, Pardue \cite{BCP97},
Eto \cite{Eto02},
Eto and Yoshida \cite{EY03},
Watanabe~\cite{Wa},
and Watanabe and Yoshida \cite{WY04}.
The first calculation had an integral form \cite{BCP97},
which was later put into more combinatorial form by Eto and Yoshida using Sterling numbers of the second kind \cite{EY03,WY04}:
%If we set $S(a,b)$ to be the Stirling number of the second kind, then their calculation is presented as the following theorem. 
%\begin{thm}$($c.f.\ \cite[Thm. 3.3]{EY03}$)$ Let $X$ be an $m \times n$ matrix of indeterminates, and $k$ a field of positive characteristic. Denote by $d = m + n - 1$ the dimension of $k[X]/I_2(X)$ where $I_2(X)$ is the ideal of size $2$ minors of $X$.
The Hilbert-Kunz multiplicity of $k[X]/I_2(X)$ with respect to $\goth{m}$ is
$$e_{\HK}(k[X]/I_2(X); \goth{m}) = {n! \over d!}S(d,n) - {1 \over d!} \sum_{r = 1}^{m - 1} \sum_{s=1}^{m-r} \binom{m}{r+s}\binom{n}{s}(-1)^{m+r}s^d.$$
%\end{thm}
The central technique in the results of Eto and Yoshida is viewing the determinantal ring
of $2 \times 2$ minors as a Segre product. They work with the relevant lengths by counting monomials in each factor similar to the monomials counted in this article (See Remark~\ref{rmk:EY}).
Their work only yields the Hilbert-Kunz multiplicity,
and the approach of this article is to use Gr\"obner bases to compute the lengths. 
Our approach has the advantage of giving a recursive method which can be used to calculate not only the multiplicity, but the complete Hilbert-Kunz function.
% An easy consequence of our method is that the Hilbert-Kunz function in the case determinantal variety of size $2$ minors is always a polynomial in $q$; see Corollary~\ref{cor:poly}. We apply this method to give a closed form for the Hilbert-Kunz function when generic matrix is of size $2 \times n$; see Theorem~\ref{thm:2byn}.
% In addition, this also has the potential to generalize provided one can find an efficient way to describe the Gr\"obner basis for matrices of larger rank; though this remains outside the scope of this article. 

\medskip
The authors thank Karl Schwede for careful readings of the document.
The motivating question for this article was asked at a problem session in a recent AIM workshop entitled ``Test ideals and multiplier ideals" organized by Karl Schwede and Kevin Tucker. 
The authors thank Kevin Tucker, Karl Schwede, Anurag Singh, and Marcus Robinson for many helpful discussions and input.

\section{Gr\"obner bases}\label{sec:GB}

We use the settings from the introduction in the rest of the paper. We 
impose any diagonal order on monomials, that is,
any monomial order in which the leading term
of the determinant of any $2 \times 2$ submatrix
is the product of the two diagonal entries.
Examples of such diagonal orders are the lexicographic order
with variables themselves ordered lexicographically by their indices
$x_{1,1} > x_{1,2} > \cdots > x_{1,n} > x_{2,1} > \cdots > x_{m,n-1} > x_{m,n}$,
or the degree reverse lexicographic orders
with variables ordered instead along the rows from right to left in each row
from the top row to the bottom row.
%By $\Delta_{[a_1,a_2 | b_1, b_1]}$ we denote the $2 \times 2$ determinant
%of the submatrix of $[x_{ij}]$
%formed by rows $a_1, a_2$ and columns $b_1, b_2$.

\begin{dff}\label{defstaircase}
We call a monomial $\prod_{i,j} x_{i,j}^{p_{i,j}}$
a {\bf staircase monomial}
if whenever $i < i'$ and $j < j'$,
then $p_{i,j} p_{i',j'} = 0$.
Thus the indices $(i,j)$ for which $p_{i,j} \not = 0$
lie on a southwest-northeast staircase in the two-dimensional integer lattice,
such as in the following matrix:
$$\left[
\begin{matrix}
 & & & & & \bullet & \bullet & \bullet \cr
 & & & & & \bullet & \cr
 & & & \bullet & \bullet & \bullet \cr
 & & & \bullet & \cr
 \bullet & \bullet & \bullet & \bullet \cr
\end{matrix}\right]
$$
We call a staircase monomial a {\bf stair monomial}
if there exist $c \in \{1, \ldots, m\}$ and $d \in \{1, \ldots, n\}$
such that $p_{l,k} = 0$ whenever $(l-c)(k-d) \not = 0$.
Thus the indices $(i,j)$ for which $p_{i,j} \not = 0$
all lie in the union of part of row $c$ with part of column $d$,
either in a $\lefthalfcap$ or a $\righthalfcup$ configuration.
%n upside-down L or in a backwards L configuration.)
A stair monomial
is called a {\bf $\bf q$-stair monomial}
if for such $c, d$,
$\sum_k p_{c,k} = q = \sum_k p_{k,d}$.
\end{dff}

\begin{thm} \label{thm:GB}
Let $G$ be the set of all $q$-stair monomials
and all $2 \times 2$ determinants of $X$.
%of the following elements:
%%Let $G_{m,n}(q)$ be the set of all of the following elements:
%\begin{enumerate}
%\item
%$\Delta_{[i_1,i_2 | j_1, j_1]} = x_{i_1,j_1} x_{i_2,j_2} -
%x_{i_1,j_2} x_{i_2,j_1}$,
%such that $1 \le i_1 < i_2 \le m$ and $1 \le j_1 < j_2 \le n$,
%\item
%\end{enumerate}
Then $G$ is a minimal reduced Gr\"obner basis for $I_2(X) + \goth{m}^{\f{q}}$
in any diagonal order. 
\end{thm}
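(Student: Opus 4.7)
My plan is to verify Buchberger's criterion after first checking that each element of $G$ lies in $I_2(X) + \goth{m}^{\f{q}}$; minimality and reducedness will then follow from the combinatorial structure of $\text{lt}(G)$.

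For the containment $G \subseteq I_2(X) + \goth{m}^{\f{q}}$, the $2 \times 2$ minors are in $I_2(X)$ by definition, and for a $q$-stair monomial $M$ centered at $(c,e)$ I would apply the relation $x_{c,k}\, x_{l,e} \equiv x_{c,e}\, x_{l,k} \pmod{I_2(X)}$ to push exponents onto $x_{c,e}$. Because the off-center row factors in row $c$ and the off-center column factors in column $e$ both have total exponent $q - p_{c,e}$, pairing them one by one yields $M \equiv x_{c,e}^q \cdot (\text{monomial off the plus}) \pmod{I_2(X)}$, placing $M$ in $\goth{m}^{\f{q}}$.

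Buchberger's criterion splits into three pair types. The $S$-polynomial of two monomials vanishes, so pairs of $q$-stair monomials are trivial. The $S$-polynomial of two $2 \times 2$ minors reduces to zero already using the minors alone, by the classical fact that the $2 \times 2$ minors of $X$ form a Gr\"obner basis for $I_2(X)$ in any diagonal order. The remaining case is a minor $f = x_{i,j} x_{i',j'} - x_{i,j'} x_{i',j}$ (with $i < i'$ and $j < j'$) paired with a $q$-stair monomial $M$ centered at $(c,e)$: here the $S$-polynomial simplifies to a single monomial $N$, essentially $M$ with one factor of $x_{i,j}$ (or $x_{i',j'}$) replaced by the anti-diagonal pair $x_{i,j'} x_{i',j}$. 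To show $N \in \langle \text{lt}(G) \rangle$, I would split into subcases according to whether $(i,j)$ and $(i',j')$ lie on the plus supporting $M$ and whether that plus is in the $\lefthalfcap$ or the $\righthalfcup$ configuration. In each case, either the swap introduces an NW-SE pair of indices in $\text{supp}(N)$ so that the leading term of some minor divides $N$, or the swap preserves the staircase and the relevant row and column sums still total $q$ through some center (possibly shifted, or reducing to a pure power like $x_{c,e}^q$), so that a $q$-stair monomial divides $N$.

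For minimality and reducedness, I would inspect leading terms directly. Distinct minor leading terms $x_{a,b} x_{a',b'}$ never divide one another, and for $q \geq 2$ they are incompatible with $q$-stair leading terms because an NW-SE pair cannot sit on a single plus shape. Among $q$-stair monomials, tracking of the center together with the requirement that the relevant row and column sums equal $q$ exactly prevents proper divisibility. The single non-leading term $-x_{i,j'} x_{i',j}$ of each minor is an NE-SW pair of variables and is not divisible by any other element of $\text{lt}(G)$, which gives reducedness. The main obstacle throughout is the Buchberger case analysis in the mixed minor--monomial case: for every geometric configuration of $(i,j), (i',j')$ relative to the plus at $(c,e)$, a concrete leading-term divisor of $N$ must be exhibited. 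The case count is modest once the $\lefthalfcap/\righthalfcup$ symmetry is exploited, but uniform bookkeeping of row and column exponent sums is essential.
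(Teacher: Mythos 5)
Your overall strategy coincides with the paper's: first prove $G \subseteq I_2(X)+\goth{m}^{\f{q}}$, then run Buchberger's criterion over the three pair types, quoting the classical fact that the $2\times 2$ minors are a Gr\"obner basis for $I_2(X)$ in a diagonal order to dispose of minor--minor pairs. Your containment argument is a direct one-pass variant of the paper's induction on degree: instead of peeling off one antidiagonal pair and invoking the inductive hypothesis on a smaller $q$-stair divisor, you push all off-center exponents onto the center using $x_{c,k}x_{l,e}\equiv x_{c,e}x_{l,k} \pmod{I_2(X)}$; the bookkeeping (both off-center sums equal $q-p_{c,e}$, so the pairs match up and the center ends with exponent exactly $q$) is correct. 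You also address minimality and reducedness, which the paper's proof leaves implicit; those divisibility checks are sound.

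The genuine gap is in the mixed case of Buchberger's criterion. You set as your target that the monomial $N=S(f,g)$ lie in the ideal generated by the leading terms of $G$, but that is not what the criterion requires: $S(f,g)$ must \emph{reduce to $0$} with respect to $G$. When the divisor you exhibit is the leading term of a minor, one reduction step replaces $N$ by another single monomial (an antidiagonal swap), and you must show that this new monomial in turn reduces, and so on, until a multiple of a $q$-stair monomial --- a one-term element of $G$ --- appears and annihilates everything; a chain that terminates at a standard monomial would refute the theorem. In the hardest subcase of the paper's proof (shared variable at the center $(c,d)$ of a $\lefthalfcap$-configuration with $g\neq x_{c,d}^q$) it takes two further determinant reductions before a multiple of a $q$-stair monomial emerges, so your dichotomy ``either a minor's leading term divides $N$ or a $q$-stair monomial divides $N$'' stops short. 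The cleanest repair: each determinant reduction $x_{i,j}x_{i',j'}\mapsto x_{i,j'}x_{i',j}$ preserves every row sum and every column sum of the exponent matrix; the monomial $N$ has both a row sum and a column sum at least $q$; and the reduction chain terminates (the term order is a well-order) at a staircase monomial, which, having a row sum and a column sum at least $q$, is divisible by a $q$-stair monomial --- the same elementary observation used in the proof of Theorem~\ref{thm:basis}. Either supply this invariant, or follow each reduction chain explicitly to its end as the paper does.
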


\begin{proof}
Set $I = I_2(X) + \goth{m}^{\f{q}}$. First we prove that $G \subseteq I$. 
We only have to prove that an arbitrary $q$-stair monomial
$\beta = \prod x_{i,j}^{p_{i,j}}$ is in $I$. We proceed by induction on the degree of $\beta$. 
The smallest possible degree of such $\beta$ is $q$. 
In this case $\beta = x_{c,d}^q \in \goth{m}^{\f{q}} \subseteq I$.
Now suppose that the degree of $\beta$ is strictly greater than $q$.
By definition of $q$-stair monomials,
there exist $i < i'$ and $j < j'$ such that $p_{i,j'} p_{i',j} \not = 0$.
%and either $p_{i,j} \not = 0$ or $p_{i',j'} \not = 0$.
Set $\alpha = \beta/(x_{i,j'}^{p_{i,j'}} x_{i',j}^{p_{i',j}}
x_{i,j}^{p_{i,j}} x_{i',j'}^{p_{i',j'}})$
and $m = \min\{p_{i,j'}, p_{i',j}\}$.
Adding $$\alpha x_{i,j'}^{p_{i,j'}-m} x_{i',j}^{p_{i',j}-m} x_{i,j}^{p_{i,j}} x_{i',j'}^{p_{i',j'}}
(x_{i,j}^m x_{i',j'}^m - x_{i',j}^m x_{i,j'}^m) \in I$$
to $\beta$,
we get the monomial
$\beta' = \alpha x_{i,j'}^{p_{i,j'}-m} x_{i',j}^{p_{i',j}-m}
x_{i,j}^{p_{i,j}+m} x_{i',j'}^{p_{i',j'}+m}$.
By the shape of $q$-stair monomials,
either $\beta'/x_{i,j}^{p_{i,j}+m}$ or $\beta'/x_{i',j'}^{p_{i',j'}+m}$
is another $q$-stair monomial of strictly smaller degree than $\beta$.
By induction on degree
this monomial and hence $\beta$ are in $I$.

Now we prove that $G$ forms a Gr\"obner basis
by going through the Buchberger algorithm.
We need to show that for all $f, g \in G$,
their S-polynomial $S(f,g)$ reduces to $0$ with respect to $G$.
If $f$ and $g$ are both determinants,
then $S(f,g)$ reduces to $0$ with respect to
other determinantal elements of $G$ by \cite{CGG90}.
If both $f$ and $g$ are monomials,
then $S(f,g) = 0$.
So we may assume that $f$ is a determinant and $g$ is a $q$-stair monomial.
If the leading terms of $f$ and $g$ have no variables in common,
then $S(f, g)$ trivially reduces to $0$ with respect to $\{f,g\}$.
So we may assume that the leading terms of $f$ and $g$
have a variable in common.
By the shape southwest-northeast structure of $q$-stair monomials
and the northwest-southeast structure of the leading monomials of determinants,
the leading terms of $f$ and $g$ have precisely one variable in common.
Let $f = x_{a,b} x_{a',b'} - x_{a',b} x_{a,b'}$
with $a < a'$, $b < b'$.

%Assume that $g = \prod x_{i,j}^{p_{i,j}}$ has the shape
%of the upside-down letter L in row~$c$ and column~$d$.

Assume that $g = \prod x_{i,j}^{p_{i,j}}$ is in the configuration $\lefthalfcap$ in row~$c$ and column~$d$.
First suppose that $p_{a,b} \not = 0$.
By direct calculation $S(f,g) = g x_{a,b'} x_{a',b}/x_{a,b}$.
If $a = c$ and $b = d$,
then $g /x_{a,b}$ is a $(q-1)$-stair monomial.
Thus $g x_{a,b'} x_{a',b}/x_{a,b}$ is a $q$-stair monomial
in the $\lefthalfcap$ configuration, %upside-down-L shape,
and so $S(f,g)$ reduces to $0$ with respect to $G$.
If $a = c$ and $b \not = d$,
then $g x_{a,b'} /x_{a,b}$ is a $q$-stair monomial,
so also in this case $S(f,g)$ reduces to $0$ with respect to $G$.
The remaining case, $a \not = c$ and $b = d$, under the assumption $p_{a,b} \not = 0$,
is handled similarly.
Now suppose that $p_{a',b'} \not = 0$.
Again, by direct calculation $S(f,g) = g x_{a,b'} x_{a',b}/x_{a',b'}$.
Necessarily $a' = c$ or $b' = d$.
By symmetry of the diagonal orders without loss of generality $a' = c$.
If in addition $b' = d$,
then $g /x_{a',b'}$ is a $(q-1)$-stair monomial.
%backwards $L$ $\righthalfcup$. \\
%upside-down $L$ Try $\lefthalfcap$. 
If $g = x_{c,d}^q$,
then
$S(f,g) = g x_{a,b'} x_{a',b}/x_{a',b'}$ is a $q$-stair monomial
(in $\righthalfcup$ configuration),
and otherwise (still with $a' = c$, $b' = d$) there exist $j > d$ and $i > c$
such that $p_{c,j} p_{i,d} \not = 0$.
In this case,
$x_{a,b'} x_{c,j}$ reduces with respect to $G$ to
$x_{a,j} x_{c,b'} = x_{a,j} x_{c,d}$
and $x_{a',b} x_{i,d}$ reduces to
$x_{c,d} x_{i,b}$,
so that $S(f,g)$ reduces to a multiple of the $q$-stair monomial
$g x_{c,d}^2 /(x_{a',b'} x_{c,j} x_{i,d})$
$= g x_{c,d} /(x_{c,j} x_{i,d})$. 
Thus it reduces to $0$ with respect to $G$.
Now suppose that $a' = c$ and $b' \not = d$.
Necessarily all exponents in $g$ are strictly smaller than $q$,
and since $g$ is a $q$-stair monomial,
there exists $i > c$ such that $p_{i,d} \not = 0$.
Since $x_{a',b} x_{i,d}$ reduces to $x_{c,d} x_{i,b}$,
it follows that $S(f,g)$ is a multiple of a $q$-stair monomial,
and hence reduces with respect to $G$ to $0$.
Similar reasoning shows that
$S(f,g)$ reduces with respect to $G$ to $0$
also in the case when $g$ is a $q$-stair monomial
in the $\righthalfcup$ configuration.
Thus $G$ is a Gr\"obner basis.
\end{proof}

\begin{rmk}\label{rmk:GrRevLex}
While the Gr\"obner basis constructed in Theorem~\ref{thm:GB}
is a Gr\"obner basis for many orders,
it is not a universal Gr\"obner basis
as it is not a Gr\"obner basis for any antidiagonal order,
and in particular it is not a Gr\"obner basis for the
graded reverse lexicographic order
while keeping the same order on the variables.
However, a completely analogous Gr\"obner basis
can be constructed in that case,
and instead of the $q$-stair monomials
in the southwest-northeast configuration
we need analogous monomials in the southeast-northwest configuration.
%That is, in addition to determinants, for each $1 \leq i \leq m$ and $1 \leq j \leq m$, one uses monomials of the form $x_{1,j}^{p_{\ell,j}}\cdots x_{i,j}^{p_{i,j}} x_{i,j+1}^{p_{i,j+1}} \cdots x_{i,n}^{p_{i,n}}$ where \mbox{$\sum_{\ell = 1}^i p_{\ell,j} = \sum_{\ell = j}^n p_{i,\ell} = q$ and $x_{i,1}^{p_{i,1}} \cdots x_{i,j}^{p_{i,j}} x_{i+1,j}^{p_{i+1,j}} \cdots x_{m,j}^{p_{m,j}}$} with $\sum_{\ell = 1}^j p_{i,\ell} = \sum_{\ell = i}^m p_{\ell,j} = q$. The proof is similar to that of Theorem~\ref{thm:GB} and will not be repeated. 
\end{rmk}

%\section{Computing Lengths}
By standard Gr\"obner basis arguments,
the length of $k[X]/(I_2(X) + \goth{m}^{[q]})$,
namely the Hilbert-Kunz function $\HK_{k[X]/I_2(X),\goth{m}}(q)$ at $q$,
equals the number of monomials in $k[X]$
that are not divisible by the leading term of any element of the Gr\"obner basis $G$
of the ideal $I_2(X) + \goth{m}^{[q]}$.
We proceed to make this collection of monomials explicit.

%Since $G_{m,n}(q)$ is a \GB\, by Theorem~\ref{thm:GB}, common results in the theory of \GB\, dating back to Macaulay states that $\dim_k  k[X]/(I_2 + \goth{m}^{[q]}) = \dim_k k[X] / \LT(G_{m,n}(q))$, where $\LT(G_{m,n}(q))$ is the monomial ideal of leading terms of $G_{m,n}(q)$ \cite{Mac27}. In fact, $G_{m,n}(q)$ is already a set of monomials so $\LT(G_{m,n}(q)$ is just  the ideal generated by $G_{m,n}(q)$ and so a $k$-basis of $k[X]/(I_2 + \goth{m}^{[q]})$ consists of the monomials in $k[X]/(I_2 + \goth{m}^{[q]})$ which are not divisible by any element of $G_{m,n}(q)$. Our first goal is to precisely describe this basis. 

%elbow shape. Also, any such monomial will lie in $LT(G_{m,n}(q))$ provided any power greater than $q$ or if the powers on both of the arms of the elbow shape form partitions of numbers greater than or equal to $q$. Therefore, it suffices to count monomials
%
%
%By the form of Gr\"obner bases from the previous section,
%a vector spaces basis consists of elements of the form
%$
%x_{i_1,j_1}^{a_{i_1,j_1}}
%x_{i_2,j_2}^{a_{i_2,j_2}}
%\cdots
%x_{i_l,j_l}^{a_{i_l,j_l}}
%$
%with exactly the following conditions:

\begin{thm}\label{thm:basis}
A $k$-vector space basis  for $k[X]/(I_2 + \goth{m}^{\f{q}})$
consists of staircase monomials $\prod_{i,j} x_{i,j}^{p_{i,j}}$
such that either for all $i  = 1, \ldots, m$, $\sum_j p_{i,j} < q$,
or for all $j  = 1, \ldots, n$,
$\sum_i p_{i,j} < q$.
\end{thm}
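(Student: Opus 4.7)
The plan is to invoke the standard Gr\"obner basis interpretation of Theorem~\ref{thm:GB}: a $k$-vector space basis of $k[X]/(I_2(X)+\goth{m}^{\f{q}})$ is given by the images of the monomials in $k[X]$ not divisible by the leading term of any element of the Gr\"obner basis $G$. Under a diagonal order, these leading terms come in two kinds: the northwest-southeast products $x_{a,b}x_{a',b'}$ with $a<a'$ and $b<b'$ (coming from the $2\times 2$ minors), and the $q$-stair monomials themselves. Avoiding divisibility by the former kind is, by definition, the same as being a staircase monomial. Hence the proof reduces to showing that a staircase monomial $\beta=\prod x_{i,j}^{p_{i,j}}$ is not divisible by any $q$-stair monomial if and only if either every row sum $\sum_j p_{i,j}$ is strictly less than $q$, or every column sum $\sum_i p_{i,j}$ is strictly less than $q$.

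For the ``if'' direction, I would observe that a $q$-stair monomial with pivot $(c,d)$ has both its row-$c$ sum and column-$d$ sum equal to $q$, so if it divided $\beta$ then $\beta$ would have row-$c$ sum at least $q$ and column-$d$ sum at least $q$, contradicting either hypothesis.

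For the ``only if'' direction I would argue the contrapositive: assume row $c$ and column $d$ of $\beta$ each have sum at least $q$, and exhibit a $q$-stair monomial dividing $\beta$. Writing $R_L, R_R$ for the partial row-$c$ sums of $\beta$ over $k<d$ and $k>d$, and $C_U, C_D$ for the partial column-$d$ sums over $l<c$ and $l>c$, the staircase property applied to one entry on row $c$ and one on column $d$ on the opposite sides of $(c,d)$ forces the two implications $R_R>0 \Rightarrow C_U=0$ and $R_L>0 \Rightarrow C_D=0$. Thus the support of $\beta$ on row $c \cup$ column $d$ is confined to one of the two hook shapes $\lefthalfcap$ (row-right, column-below) or $\righthalfcup$ (row-left, column-above), possibly degenerating to a single entry at $(c,d)$. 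In any such configuration, setting the central exponent equal to $\min(p_{c,d},q)$ and then greedily assigning exponents bounded by those of $\beta$ along the appropriate side of row $c$ and column $d$ to reach total row- and column-sums exactly $q$ produces a $q$-stair monomial dividing $\beta$; the hypothesis that both sums are at least $q$ is precisely what makes the greedy allocation succeed.

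The main obstacle is the case analysis in the ``only if'' direction, specifically checking that the staircase restriction on $\beta$ really pins the support on the cross of row $c$ and column $d$ into one of the two hook configurations, and handling the degenerate edge case $p_{c,d}\ge q$ (where the monomial $x_{c,d}^q$ alone suffices). Once that structural reduction is in hand, the greedy construction of the dividing $q$-stair monomial is routine.
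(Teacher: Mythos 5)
Your proposal follows the same route as the paper: read off the standard monomials of the Gr\"obner basis $G$ from Theorem~\ref{thm:GB}, note that avoiding the determinantal leading terms is exactly the staircase condition, and show that a staircase monomial avoids all $q$-stair monomials precisely when all row sums or all column sums are below $q$. Your hook-configuration and greedy-allocation analysis correctly fills in the one step the paper leaves as a bare assertion (that a staircase monomial with some row sum and some column sum at least $q$ is divisible by a $q$-stair monomial), including the degenerate case where both sides of row $c$ are occupied, which forces $p_{c,d}\ge q$ and lets $x_{c,d}^q$ do the job.
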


%$x_{i_1,j_1}^{p_{i_1,j_1}}x_{i_2,j_2}^{p_{i_2,j_2}} \cdots x_{i_\ell,j_\ell}^{p_{i_\ell,j_\ell}}$ subject to the following constraints:
%\begin{enumerate}
%\item
%All pairs $(i_k, j_k)$ are distinct.
%\item
%All $p_{i_k,j_k} \in \{1, \ldots, q-1\}$.
%\item
%$m \ge i_1 \ge i_2 \ge \cdots \ge i_l \ge 1$
%and
%$1 \le j_1 \le j_2 \le \cdots \le j_l \le n$.
%\item
%Let $A$ be the $m \times n$ matrix
%with $A_{i_k,j_k} = p_{i_k,j_k}$ and zero entries elsewhere.
%Either for all rows or for all columns of $A$,
%the sum of the entries is strictly smaller than $q$.
%\end{enumerate}
%\end{thm}

\begin{proof}
Theorem~\ref{thm:GB}
gives a Gr\"obner basis $G$ of $I_2 + \goth{m}^{\f{q}}$
in any diagonal monomial order.
Any staircase monomial for which either all row sums or all column sums
are strictly smaller than $q$
is not divisible by the leading term of any element of $G$.
Conversely,
let $M$ be a monomial in $k[X]$
that is not divisible by the leading term of any element of $G$.
Since $G$ contains all $2 \times 2$ determinants
whose leading monomials are products of two variables
in the northwest-southeast configuration,
necessarily $M$ is a staircase monomial.
If some row sum and some column sum of the exponents in~$M$ are at least $q$,
then $M$ is a multiple of a $q$-stair monomial.
Hence it is divisible by an element of~$G$,
which is a contradiction.
So the set of all staircase monomials
for which either all row sums or all column sums are strictly smaller than $q$
equals the set of all monomials in $k[X]$
that are not divisible by the leading terms of any element of $G$.
It follows by the standard Gr\"obner basis arguments
that this set is a $k$-vector space basis.
\end{proof}

%\begin{dff}
%By $N_{m,n,q}$ we denote the number of monomials in the $k$-vector space basis
%of $k[X]/(I_2 + \goth{m}^{\f{q}})$,
%as
%established in Theorem~\ref{thm:basis}.
%\end{dff}

\begin{rmk}\label{rmk:EY}
%The number $N_{m,n,q}$
%is directly related to the numbers $\alpha_{d,n}$ and $\alpha_{d,n,q}$ 
Eto and Yoshida \cite{Eto02,EY03}
used similar vector space methods to compute the Hilbert-Kunz
multiplicity of the ring $k[X]/I_2(X)$
(but not the Hilbert-Kunz function). We now give a translation of their approach and ours. 
%We explain their formulation.
%It is unfortunate that notations between that article and this one do not line up, however we bridge that gap now. 
Let $d \in \ZZ_+$. Viewing $k[X]/I_2(X)$ as a Segre product $k[z_1,\ldots,z_m] \# k[y_1,\ldots,y_n]$, set $\alpha_{m,d}$ to be 
the number of monomials in $k[z_1, \ldots, z_m]$ of total degree $d$ and $\alpha_{m,d,q}$ to be the number 
of monomials in $k[z_1, \ldots, z_m]$ of total degree $d$ and where $\deg z_i < q$ for all $i$,
and similarly for $\alpha_{n,d}$ and $\alpha_{n,d,q}$ in $k[y_1,\ldots,y_n]$.
Both in \cite{Eto02} and \cite{EY03},
$\lambda_k(k[X]/(I_2(X) + \goth{m}^{q}))$
is given by counting all monomials in $k[z_1,\ldots,z_m] \# k[y_1,\ldots,y_n]$ 
which have total degree $d$ but with the conditions that either all the $z_i$ have degree at most $q$ 
or all the $y_i$ have degree at most $q$
(see \cite[pg. 319]{Eto02}): % and \cite[pg. 10]{EY03}).
\begin{equation}\label{eq:1}
\lambda_k(k[X]/(I_2(X) + \goth{m}^{\f{q}}))
= \sum_{d = 0}^{(q-1)n} \alpha_{m,d} \alpha_{n,d,q} + \sum_{d = 0}^{(q-1)m} \alpha_{n,d} \alpha_{m,d,q} - \sum_{d = 0}^{(q-1)m} \alpha_{n,d,q} \alpha_{m,d,q}.
\end{equation} 
This formulation is the same as our count of monomials in Theorem~\ref{thm:basis} as we show next.
Consider the usual map $k[X] \to k[z_1,\ldots,z_m] \# k[y_1,\ldots,y_n]$
sending $x_{i,j}$ to $z_i y_j$, 
and thus $\prod_{i,j} x_{i,j}^{p_{i,j}}$ to
$$
\prod_{i,j} (z_i y_j)^{p_{i,j}}
= (\prod_i z_i^{\sum_j p_{i,j}}) (\prod_j y_j^{\sum_i p_{i,j}}).
$$
%$\underline{z}^{\underline{r}} \underline{y}^{\underline{c}}$,
%where $\underline{z} = z_1\cdots z_m$ 
%and $\underline{y} = y_1 \cdots y_n$ is multi-index notation.
% Likewise to each monomial $\underline{m}$ 
%we associate a matrix $A(\underline{m}) = (p_{i,j})$ where the variable $x_{i,j}^{p_{i,j}}$ appears in $\underline{m}$.
The exponents in the variables $z_i$ are row sums of
the matrix of the exponents of the monomial in $k[X]$,
and the exponents in the $y_j$ are the column sums.
%All the conditions of
%Theorem~\ref{thm:basis} translate appropriately,
%so that our count is the same as (\ref{eq:1}). 
This matches the basis described in 
Theorem~\ref{thm:basis}.

\end{rmk}

\section{Recursion for computing lengths}

%We continue to use Setting~\ref{mainstt}. 
In this section we describe a recursion
that we then use to compute the Hilbert function
$\HK_{k[X]/I_2(X), \goth{m}}(q) = \lambda(k[X]/(I_2(X) + \goth{m}^{\f{q}}))$.
We begin by setting up a useful notation for counting elements of the basis
according to restrictions on the row and column sums. 

\begin{dff}
\label{def:genbasis}
Let $m, n \in \mathbf{N}$,
%Let $m, n$ be non-negative integers,
and let $r_1, \ldots, r_m, c_1, \ldots, c_m \in \mathbf{Z} \cup \{\infty\}$.
Let $N_q(m,n;r_1, \ldots, r_m; c_1, \ldots, c_n)$
be the number of staircase monomials
$\prod_{i,j} x_{i,j}^{p_{i,j}}$
such that
\begin{enumerate}
\item
For all $i = 1, \ldots, m$,
$\sum_j p_{i,j} \le r_i$,
and for all $j = 1, \ldots, n$,
$\sum_i p_{i,j} \le c_j$.
\item
Either
for all $i = 1, \ldots, m$,
$\sum_j p_{i,j} < q$,
or for all $j = 1, \ldots, n$,
$\sum_i p_{i,j} < q$.
\end{enumerate}
Note that $N_q(m,n;r_1,\ldots,r_m; c_1,\ldots,c_n) = 
N_q(n,m;c_1,\ldots,c_n;r_1,\ldots,r_m)$.
If any $r_i$ or $c_j$ is negative,
then $N_q(m,n;r_1,\ldots,r_m; c_1,\ldots,c_n) = 0$,
and if $c_1, \ldots, c_n \ge 0$,
then
$N_q(0,n;;c_1, \ldots, c_n) = 1$. 
\end{dff}

Throughout we abbreviate by writing $\overline{c}$ when $c$ gets repeated.
For example, the symbol
$N_q(m,n; \overline\infty; \overline\infty)$
is an abbreviation of
$N_q(m,n; \infty,\ldots,\infty; \infty,\ldots,\infty)$.

We note that the numbers introduced in the definition above
also compute co-lengths of certain very natural ideals. In particular, 
$N_q(m,n;r_1, \ldots, r_m; c_1, \ldots, c_n)$
equals the length
$$
\lambda \left(
{k[X] \over
I_2(X) + (x_{i,j}^q: i, j) + \sum_{i=1}^m (x_{i,1}, \ldots, x_{i,n})^{r_i + 1}
+ \sum_{j=1}^n (x_{1,j}, \ldots, x_{m,j})^{c_j + 1}
}\right),
$$
where for an ideal $I$,
we set $I^{\infty}$ to be the $0$ ideal.
In particular the Hilbert-Kunz function
$\lambda_k(k[X]/(I_2(X) + \goth{m}^{\f{q}})$
that we are interested in computing
is
$N_q(m,n; \infty,\ldots,\infty; \infty,\ldots,\infty)$
$= N_q(m,n; \overline\infty; \overline\infty)$.
Despite this being our primary interest,
the recursion forces us to consider $r_i$ and $c_i$ different from $\infty$.

We first establish the base case $m = 1$ of the induction. 

\begin{thm}
\label{thmN1}
\begin{enumerate}
\item
$N_q(1,n;\infty;c_1, \ldots, c_n) = \prod_{i=1}^n \min\{c_i+1, q\}$.
\item
$N_q(1,n;\infty;\overline\infty)
= N_q(1,n;\infty;\infty, \ldots, \infty) = q^n$.
\item
If $r < \infty$,
then
$$
N_q(1,n;r; c_1, \ldots, c_n)
= \sum_{i_1=0}^{\min\{c_1,r,q-1\}}
\sum_{i_2=0}^{\min\{c_2,r-i_1,q-1\}}
\cdots
\sum_{i_{n-1}=0}^{\min\{c_{n-1},r-\sum_{j=1}^{n-2} i_j,q-1\}}
\min\{r+1-\sum_{j=1}^{n-1} i_j,c_n+1, q\}.
$$
\item
If $r < q$,
and if $r \le c_1, \ldots, c_n$,
then
$$
N_q(1,n;r; c_1, \ldots, c_n)
= {r + n \choose n}.
$$
In particular,
$N_q(1,n;r; c_1, \ldots, c_n)$
is independent of $c_1, \ldots, c_n$.
(Note that this is the number of ordered partitions of $0,1,\ldots,r$ into $n$
or fewer parts.)
\end{enumerate}
\end{thm}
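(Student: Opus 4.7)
The plan is to apply Theorem~\ref{thm:basis} in the case $m=1$, where the staircase condition is vacuous, and then reduce the disjunctive condition (2) of Definition~\ref{def:genbasis} to a single constraint. With $m=1$, condition (2) says either $\sum_j p_{1,j} < q$ (call this (A)) or $p_{1,j} < q$ for every $j$ (call this (B)). Since any $p_{1,j}$ is bounded above by $\sum_k p_{1,k}$, condition (A) implies (B). Hence "(A) or (B)" is equivalent to (B) alone, and the entire count reduces to enumerating tuples $(p_{1,1},\dots,p_{1,n})$ of non-negative integers satisfying $p_{1,j}\le\min\{c_j,q-1\}$ for every $j$, together with $\sum_j p_{1,j}\le r$.

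For parts (1) and (2), the hypothesis $r=\infty$ removes the sum constraint entirely, so the choices of the $p_{1,j}$ are independent and each $p_{1,j}$ ranges over $\{0,1,\ldots,\min\{c_j,q-1\}\}$; that gives $\min\{c_j+1,q\}$ possibilities for the $j$-th coordinate, proving (1). Specializing $c_j=\infty$ yields $q^n$, which is (2).

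For part (3), the plan is simply to iterate the independent counting argument one coordinate at a time, but now subject to the cumulative bound $\sum p_{1,j}\le r$. Writing $i_j=p_{1,j}$, once $i_1,\ldots,i_{k-1}$ are fixed, the next coordinate $i_k$ ranges from $0$ to $\min\{c_k,\,r-\sum_{j<k} i_j,\,q-1\}$, and the final coordinate contributes $\min\{c_n+1,\,r+1-\sum_{j<n} i_j,\,q\}$ choices (one more than the upper bound on $i_n$). Summing produces exactly the nested sum displayed in (3).

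For part (4), the hypotheses $r<q$ and $r\le c_j$ make every pointwise bound redundant: any tuple with $\sum p_{1,j}\le r$ automatically satisfies $p_{1,j}\le r\le c_j$ and $p_{1,j}\le r<q$. So the count collapses to the number of non-negative integer solutions of $p_{1,1}+\cdots+p_{1,n}\le r$, which by the standard stars-and-bars trick (introduce a slack variable to convert the inequality to an equality with $n+1$ unknowns summing to $r$) equals $\binom{r+n}{n}$. The only subtle point anywhere in the argument is justifying the reduction of "(A) or (B)" to (B); once that observation is in place, the rest is elementary counting.
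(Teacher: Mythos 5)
Your proposal is correct and follows essentially the same route as the paper: both reduce to counting tuples $(p_{1,1},\dots,p_{1,n})$ coordinate by coordinate subject to $p_{1,j}\le\min\{c_j,q-1\}$ and $\sum_j p_{1,j}\le r$ (your explicit observation that the disjunction collapses to the columnwise condition is left implicit in the paper but is exactly what underlies its formula $N_q(1,1;r;c)=\min\{r+1,c+1,q\}$), yielding the nested sum in (3). The only cosmetic difference is in part (4), where you count solutions of $\sum_j p_{1,j}\le r$ directly by stars and bars, while the paper inducts on $n$ via the identity $\sum_{i=0}^{r}\binom{r-i+n-1}{n-1}=\binom{r+n}{n}$; these are equivalent computations.
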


\begin{proof}
If $m = 1 = n$,
then clearly
$N_q(1,1;r; c) = \min\{r + 1,c + 1,q\}$.
If $n > 1$,
$N_q(1,n;r; c_1, \ldots, c_n)$
counts the number of monomials $\prod_j x_{1,j}^{p_{1,j}}$
where $p_{1,1}$ varies in the set $\{0, \ldots, \min\{c_1, r, q-1\}\}$,
and for each such $p_{1,1}$,
the possible number of rest of the $1 \times (n-1)$ matrix of $p_{1,j}$
has the count of $N_q(1,n-1;r-p_{1,1};c_2, \ldots,c_n)$.
Thus, by repeating this reasoning,
\begin{align*}
N_q&(1,n;r; c_1, \ldots, c_n)
= 
\sum_{i_1=0}^{\min\{c_1,r,q-1\}} N_q(1,n-1;r-i_1;c_2, \ldots,c_n) \cr
&= 
\sum_{i_1=0}^{\min\{c_1,r,q-1\}} \sum_{i_2=0}^{\min\{c_2,r-i_1,q-1\}}
N_q(1,n-2;r-i_1-i_2;c_3, \ldots,c_n) \cr
&= \qquad \vdots \cr
&= \sum_{i_1=0}^{\min\{c_1,r,q-1\}}
\sum_{i_2=0}^{\min\{c_2,r-i_1,q-1\}}
\cdots
\sum_{i_{n-1}=0}^{\min\{c_{n-1},r-\sum_{j=1}^{n-2} i_j,q-1\}}
N_q(1,1;r-\sum_{j=1}^{n-1} i_j;c_n) \cr
&= \sum_{i_1=0}^{\min\{c_1,r,q-1\}}
\sum_{i_2=0}^{\min\{c_2,r-i_1,q-1\}}
\cdots
\sum_{i_{n-1}=0}^{\min\{c_{n-1},r-\sum_{j=1}^{n-2} i_j,q-1\}}
\min\{r+1-\sum_{j=1}^{n-1} i_j,c_n+1, q\}. \cr
\end{align*}
It remains to prove (4).
If $n = 1$,
then
$N_q(1,1;r;c_1) = \min\{r+1,c_1+1,q\}$,
which by assumption equals $r+1 = {r+1 \choose 1}$.
Now suppose that we know the result for $n-1$.
Then
$$
N_q(1,n;r; c_1, \ldots, c_n)
= \sum_{i_1=0}^r {r-i_1 + n-1 \choose n-1}
%= \sum_{i=n-1}^{r+n-1} {k \choose n-1}
= {r+n-1+1 \choose n-1+1}
= {r+n \choose n},
$$
which proves the proposition.
\end{proof}

Recall that $N_q(0,n;;c_1, \ldots, c_n) = 1$.
In the next theorem and beyond we utilize the so called `monus' operation defined as 
$a \dotdiv b = \max\{ a -b, 0\}$.
%notation $a \dminus b = \max\{a-b,0\}$.

\begin{thm}\label{thmrecur}
%Recall that $N_q(0,n;;c_1, \ldots, c_n) = 1$.
%We use the notation $a \dotdiv b = \max\{a-b,0\}$.
For all $m, n \ge 1$
and all $r_i, c_j \in \mathbf{N} \cup \{\infty\}$,
%for $N_q(m,n;r_1,\ldots, r_m; c_1, \ldots, c_n)$:
\begin{align*}
N_q&(m,n;r_1,\ldots, r_m; c_1, \ldots, c_n)
= 
N_q(m,n-1;r_1,\ldots, r_m; c_2, \ldots, c_n) \cr
&\hskip2em
+ \sum_{i=1}^{m-1} \sum_{j=1}^{\min\{r_i,q-1\}}
(N_q(m-i,1;r_{i+1}, \ldots, r_m; c_1-j)
\dotdiv N_q(m-i,1;r_{i+1}, \ldots, r_m; q-1-j))\cr
&\hskip6em
\cdot N_q(i,n-1;\min\{r_1,q-1\}, \ldots, \min\{r_{i-1},q-1\}, \min\{r_i,q-1\} - j;c_2,\ldots, c_n) \cr
&\hskip2em
+ \sum_{i=1}^m \sum_{j=1}^{\min\{r_i,q-1\}}
N_q(m-i,1;r_{i+1}, \ldots, r_m; \min\{c_1,q-1\}-j) \cr
&\hskip6em
\cdot (N_q(i,n-1;r_1, \ldots, r_{i-1}, r_i - j;\min\{c_2,q-1\},\ldots, \min\{c_n,q-1\}) \cr
&\hskip9em
\dotdiv
N_q(i,n-1;r_1, \ldots, r_{i-1}, q-1 - j;\min\{c_2,q-1\},\ldots, \min\{c_n,q-1\})) \cr
&\hskip2em
+ \sum_{i=1}^m \sum_{j=1}^{\min\{r_i,q-1\}}
N_q(m-i,1;r_{i+1}, \ldots, r_m; \min\{c_1,q-1\}-j) \cr
&\hskip6em
\cdot N_q(i,n-1;r_1, \ldots, r_{i-1}, \min\{r_i,q-1\} - j;c_2,\ldots, c_n). \cr
\end{align*}
\end{thm}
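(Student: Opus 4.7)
The plan is to decompose the set of staircase monomials counted by $N_q(m,n;r_1,\ldots,r_m;c_1,\ldots,c_n)$ according to the structure of the first column of the exponent matrix.  If every $p_{i,1}=0$, the monomial lives in columns $2,\ldots,n$ and both the staircase and the $N_q$-condition reduce exactly to the corresponding conditions on the $(m\times(n-1))$-submatrix, yielding the leading term $N_q(m,n-1;r_1,\ldots,r_m;c_2,\ldots,c_n)$.  Otherwise let $i$ be the smallest row with $p_{i,1}>0$ and set $j:=p_{i,1}$.  The staircase condition forces $p_{i',1}=0$ for $i'<i$ and $p_{i',j'}=0$ whenever $i'>i$ and $j'>1$, so the monomial factors independently into the scalar $j$, an upper block in rows $1,\ldots,i$ and columns $2,\ldots,n$ (with row $i$'s budget reduced by $j$), and a lower block $(p_{i+1,1},\ldots,p_{m,1})$.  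The three sums in the statement come from summing over such $(i,j)$ with $1\le j\le\min(r_i,q-1)$.

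For each $(i,j)$ I split on whether the overall row-$i$ sum is $\ge q$ or $<q$.  In the $\ge q$ case the $N_q$-condition forces all column sums $<q$: the lower factor is $N_q(m-i,1;r_{i+1},\ldots,r_m;\min(c_1,q-1)-j)$, while the upper factor is the monus of two upper $N_q$-counts differing only in the row-$i$ bound ($r_i-j$ versus $q-1-j$).  This monus precisely measures the event ``upper row-$i$ sum $\ge q-j$'' and vanishes when $r_i<q$, producing Term~3.  In the $<q$ case I apply inclusion-exclusion $|A|+|B\cap\{\text{row }i<q\}|-|A\cap B|$, where $A,B$ abbreviate ``all rows $<q$'' and ``all columns $<q$''.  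A key auxiliary observation is that for a single-column block the $N_q$-condition collapses to ``each entry $<q$'' (if some entry were $\ge q$ then the column sum would be too, contradicting the other alternative); in particular the lower-block contributions in cases $B$ and $A\cap B$ coincide.  Term~2 then emerges as $(L_A-L_{AB})\cdot U_A$, written via the monus of two single-column $N_q$-counts with bounds $c_1-j$ and $q-1-j$, and vanishes when $c_1<q$.  Term~4 emerges as $L_{AB}\cdot U_A''$, where the $N_q$-condition inside the upper factor $U_A''=N_q(i,n-1;r_1,\ldots,r_{i-1},\min(r_i,q-1)-j;c_2,\ldots,c_n)$ performs its own inclusion-exclusion over upper analogues of $A$ and $B$, producing precisely the combination of upper counts remaining from $|B\cap\{\text{row }i<q\}|-|A\cap B|$.

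The hard part will be verifying this last collapse cleanly: that the $N_q$-condition built into the upper factor of Term~4, once expanded by its own inclusion-exclusion, yields exactly the combination of upper-block counts left over after Terms~2 and~3 have absorbed the $|A|$-piece and the piece where the row-$i$ sum is $\ge q$.  The bookkeeping requires distinguishing $r_{i'},c_{j'}$ from $\min(r_{i'},q-1),\min(c_{j'},q-1)$ in each block and confirming that every monus correctly nullifies the forced-empty cases.  This is tedious but mechanical, and once done, summing Term~1 with $\sum_{i,j}(\text{Term~2}+\text{Term~3}+\text{Term~4})$ recovers $N_q$ exactly.
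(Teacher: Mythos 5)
Your proposal is correct and follows essentially the same route as the paper: decompose by the first column of the exponent matrix, isolate the least row $i$ with $p_{i,1}=j\neq 0$ (the staircase condition then splitting the matrix into an independent lower single-column block and an upper $i\times(n-1)$ block), and sort the valid fillings into the three cases according to whether the first-column sum and the $i$th-row sum reach $q$. Your inclusion--exclusion regrouping within the ``row $i<q$'' case is only a reorganization of the paper's direct three-way split and yields the identical three terms; if anything, your verification of how the monus and $\min$ operations encode each case is more detailed than the paper's.
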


\begin{proof}
By Theorem~\ref{thm:basis},
it suffices to recursively count all monomials $\prod_{i,j} x_{i,j}^{p_{i,j}}$
as in Definition~\ref{def:genbasis}.
The number of such $[p_{i,j}]$ with all $p_{i,1}$ zero
is $N_q(m,n-1;r_1,\ldots, r_m; c_2, \ldots, c_n)$.
Now suppose that some $p_{i,1}$ is non-zero.
Let $i \in \{1, \ldots, m\}$ be smallest with this property.
By the staircase condition,
there are no non-zero entries in $[p_{i,j}]$ in the submatrix
of rows $i+1, \ldots, m$ and columns $2, \ldots, n$,
and by the assumption on $i$,
there are no non-zero entries in $[p_{i,j}]$ in the first column
in rows $1,\ldots, i-1$.
So it remains to count the possible combinations
of how to fill in the submatrix of the first column in rows $i, \ldots, m$
and the submatrix of rows $1, \ldots, i$
and columns $2, \ldots, n$.
If we fill the first column so that the total sum is $q$ or larger,
then we have to make sure that all the rows in the rest of $[p_{i,j}]$ add
up to strictly less than $q$;
if the first column adds up to strictly less than $q$
and the $i$th row adds up to $q$ or more,
than we need to control all the columns of $[p_{i,j}]$ to be strictly less than $q$;
and finally,
if the first column and the $i$th row each add up to at most $q-1$,
then we have no further restriction on the rest.
This is expressed precisely by the sums in the recursive formulation.
\end{proof}

Theorems~\ref{thmN1} and \ref{thmrecur} immediately give:
%it immediately follows that $\HK_{k[X]/I_2(X),\goth{m}}(q)$
%is in fact a polynomial in $q$. 

\begin{cor} \label{cor:poly}
The Hilbert-Kunz function
$\HK_{k[X]/I_2(X),\goth{m}}(q)$
is a polynomial in $q$.
\hfill\qedbox
\end{cor}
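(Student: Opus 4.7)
The plan is to prove the corollary by induction on $m+n$, applied to a mild strengthening of the target statement that is preserved under the recursion of Theorem~\ref{thmrecur}. Since $\HK_{k[X]/I_2(X),\goth{m}}(q) = N_q(m,n;\overline\infty;\overline\infty)$, proving polynomiality (in $q$, for $q$ large) for the full family of $N_q$ values that arise during the recursion will imply the corollary as the all-$\infty$ specialization.

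The base case is $m=1$, or by the symmetry $N_q(m,n;\vec r;\vec c) = N_q(n,m;\vec c;\vec r)$, also $n=1$, both handled by Theorem~\ref{thmN1}. Once $q$ exceeds every finite parameter, each minimum against $q-1$ stabilizes, and the explicit iterated sums in parts (1)--(4) of Theorem~\ref{thmN1} become iterated sums of polynomial summands over polynomially-varying ranges; these collapse to polynomials in $q$ by standard Faulhaber-type identities. In particular $N_q(1,n;\infty;\overline\infty) = q^n$ is manifestly polynomial.

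For the inductive step, expand $N_q(m,n;\vec r;\vec c)$ using Theorem~\ref{thmrecur}. Every $N_q$-value appearing on the right has strictly smaller total index $m+n$, so by the inductive hypothesis each factor is polynomial in $q$ (and polynomial in any summation index $j$ appearing inside). The outer $j$-summations run up to $\min\{r_i,q-1\}$, which for $r_i = \infty$ equals $q-1$; summing a polynomial expression in $j$ and $q$ over $1 \le j \le q-1$ yields again a polynomial in $q$. The monus operations disappear for $q$ large enough, since their two $N_q$ inputs are then polynomials whose difference has a definite sign. This produces the required polynomiality on the left, and specializing to all $\infty$ parameters finishes the corollary.

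The main technical obstacle is choosing the right class of parameter patterns over which the induction runs. Starting from all-$\infty$, the recursion introduces new arguments of the form $q-1-j$, $\min\{r_i,q-1\}-j$, and $c_1-j$, so the inductive class cannot consist of fixed integers alone. The natural choice is the class of tuples whose entries are each either $\infty$, a fixed non-negative integer, or $q-1$ shifted by a bounded non-negative integer. One must verify that this class is closed under the parameter transformations of the recursion (subtracting a bounded shift, replacing $\infty$ by $q-1$ via the min) for $q$ sufficiently large; granted this closure, the induction proceeds without further incident.
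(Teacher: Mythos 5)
Your proposal is correct and follows exactly the route the paper intends: the paper derives Corollary~\ref{cor:poly} directly from Theorem~\ref{thmN1} (base case) and Theorem~\ref{thmrecur} (recursive step) without spelling out the details, and your induction on $m+n$ over the class of parameter tuples of the form $\infty$, fixed integers, or $q-1$ minus bounded shifts is precisely the bookkeeping that makes ``immediately give'' rigorous. The only point worth noting is that for the tuples actually reachable from the all-$\infty$ start the minima and monus operations resolve uniformly in $q$ (as the paper's closed forms in Section~\ref{sec:2byn} confirm), so the ``for $q$ large'' hedge is not even needed.
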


%\vskip2cm
\section{Computing the $2 \times n$ case}\label{sec:2byn}

Our main interest
is the Hilbert-Kunz function $N_q(2,n;\overline{\infty} ;\overline{\infty} )$,
but the recursion in Theorem~\ref{thmrecur}
forces us to calculate also
$N_q(2,n;\overline \infty;\overline {q-1})$,
$N_q(2,n;\infty, r;\overline \infty)$,
$N_q(2,n;\infty, r;\overline {q-1})$
for all $r < q$. 
We begin this section with a summary of the main results proved in this section,
where $r$ in the table is always strictly smaller than $q$:
% these numbers 
%for $n \ge 2$ and $r < q$ found in Table $1$. These include the main results of the paper, 
%namely Theorem~\ref{thm:2byn} and Corollary~\ref{corHKmult}. 
%The last 
%two results, Theorems~\ref{thm2q-1rq-1} and \ref{thm2q-1rinfty} are 
%not strictly used in the main results of this paper, but are helpful computations
%of some of the recursive steps required to generalize Theorem~\ref{thm:2byn}
%outside the $2 \times n$ case. 
\smallskip

%\begin{table}[ht] \label{tbl1}
\begin{center}
\renewcommand{\arraystretch}{2}
\begin{tabular}{|l|l|}
\hline
Theorem &  Result ($r < q$)\\
\hline 
\hline 
Theorem~\ref{thm2ninfq-1} & $N_q(2,n;\overline{\infty}; \overline{q-1})
= q^{n+1} + (n-2) q^{n-1}{q \choose 2}$ \\
\hline 
Theorem~\ref{thm2ninfrinf} & 
$N_q(2,n;\infty,r; \overline{\infty})
= (n-1) {r+n \choose n+1} + (r+1) q^n$ \\
\hline
Theorem~\ref{thm2ninfrq-1} & 
$N_q(2,n;\infty,r; \overline{q-1})
= (r+1) q^n - {r + n \choose n+1}$\\
\hline
Theorem~\ref{thm:2byn} &
$N_q(2,n;\overline{\infty}; \overline{\infty})
= {nq^{n+1} - (n-2)q^n \over 2} +n{n+ q-1 \choose n+1}$ \\
\hline
Corollary~\ref{corHKmult} & 
%(see also \cite[Theorem 3.3]{EY03}):

$\lim_{q \to \infty} {N_q(2,n;\overline \infty; \overline \infty) \over q^{n+1}}
= {n \over 2} + {n \over (n+1)!}$\\
\hline
Theorem~\ref{thmNFsign} &
$N_q(2,n;\infty,r; \infty, \overline{q-1}) = q^n (r+1)$ \\
\hline
Theorem~\ref{thm2q-1rq-1} & 
$N_q(2,n;q-1,r; \overline{q-1})
= q{r+n \choose n}
+ \sum_{i=1}^{n-1} {q+i-1 \choose i+1} {r+n-i \choose n-i}
- n {r+n \choose n+1}$ \\
\hline
Theorem~\ref{thm2q-1rinfty} & 
$N_q(2,n;q-1,r; \overline{\infty})
= q {r+n \choose n}
+ \sum_{i=1}^{n-1} {q+i-1 \choose i+1} {r+n-i \choose n-i}$ \\
\hline
\end{tabular}
%\caption{Summary of results in this section.}
\end{center}
%\end{table}

\medskip

Throughout we will make use of many detailed but easy lemmas concerning binomial sums. To improve readability the proofs of these can be found in Appendix~\ref{sec:binform}.
% which only needs to be referenced as needed. 

%The general recursion from Theorem~\ref{thmrecur}
%applied to the case $m = 2$ gives:
%\begin{align*}
%N_q&(2,n;r_1,r_2; c_1, \ldots, c_n)
%= 
%N_q(2,n-1;r_1,r_2; c_2, \ldots, c_n) \cr
%&
%+ \sum_{j=1}^{\min\{r_1,q-1\}}
%(N_q(1,1;r_2; c_1-j) \dotdiv N_q(1,1;r_2; q-1-j))
%\cdot N_q(1,n-1;\min\{r_1,q-1\} - j;c_2,\ldots, c_n) \cr
%&
%+ \sum_{i=1}^2 \sum_{j=1}^{\min\{r_i,q-1\}}
%N_q(2-i,1;r_{i+1}, \ldots, r_2; \min\{c_1,q-1\}-j) \cr
%&\hskip4em
%\cdot (N_q(i,n-1;r_1, \ldots, r_{i-1}, r_i - j;\min\{c_2,q-1\},\ldots, \min\{c_n,q-1\}) \cr
%&\hskip7em
%\dotdiv
%N_q(i,n-1;r_1, \ldots, r_{i-1}, q-1 - j;\min\{c_2,q-1\},\ldots, \min\{c_n,q-1\})) \cr
%&
%+ \sum_{i=1}^2 \sum_{j=1}^{\min\{r_i,q-1\}}
%N_q(2-i,1;r_{i+1}, \ldots, r_2; \min\{c_1,q-1\}-j) \cr
%&\hskip4em
%\cdot N_q(i,n-1;r_1, \ldots, r_{i-1}, \min\{r_i,q-1\} - j;c_2,\ldots, c_n). \cr
%\end{align*}

We now handle each case required by the recursion in turn. 

\begin{thm}\label{thm2ninfq-1}
For $n \ge 2$,
$N_q(2,n;\overline{\infty}; \overline{q-1})
= q^{n+1} + (n-2) q^{n-1}{q \choose 2}$.
\end{thm}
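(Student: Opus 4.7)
The plan is to exploit the fact that the column-sum bound $q-1$ is strictly less than $q$, so the alternative ``for all $j$, $\sum_i p_{i,j} < q$'' in condition~(2) of Definition~\ref{def:genbasis} is automatically satisfied for every staircase monomial we are counting. Thus $N_q(2,n;\overline\infty;\overline{q-1})$ reduces to counting $2\times n$ staircase monomials $\prod_{i,j}x_{i,j}^{p_{i,j}}$ subject only to $p_{1,j}+p_{2,j}\le q-1$ for every column $j$. This sidesteps the recursion of Theorem~\ref{thmrecur} entirely and lets me count directly.

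I would stratify by the index $L:=\max\{j:p_{2,j}>0\}$, using the convention $L=0$ when row~$2$ is identically zero. With only two rows, the staircase condition is equivalent to $p_{1,j}=0$ for $j<L$ and $p_{2,j}=0$ for $j>L$. When $L=0$, row~$1$ ranges freely with each $p_{1,j}\in\{0,\ldots,q-1\}$, contributing $q^n$ monomials. When $L\in\{1,\ldots,n\}$, the columns $j<L$ each contribute $q$ choices (free $p_{2,j}$), the columns $j>L$ each contribute $q$ choices (free $p_{1,j}$), and column $L$ itself contributes $\binom{q}{2}$ choices: the substitution $p'_{2,L}=p_{2,L}-1\ge 0$ converts the conditions $p_{2,L}\ge 1$ and $p_{1,L}+p_{2,L}\le q-1$ into $p_{1,L}+p'_{2,L}\le q-2$, and the number of nonnegative integer solutions to $a+b\le q-2$ is $\binom{q}{2}$. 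The per-$L$ contribution is therefore $q^{L-1}\cdot\binom{q}{2}\cdot q^{n-L}=q^{n-1}\binom{q}{2}$, independent of $L$.

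Summing over $L\in\{0,1,\ldots,n\}$ produces $q^n+nq^{n-1}\binom{q}{2}$, and a brief algebraic check confirms this equals the stated $q^{n+1}+(n-2)q^{n-1}\binom{q}{2}$: the difference is $q^{n-1}\bigl(q-q^2+2\binom{q}{2}\bigr)=q^{n-1}\bigl(q-q^2+q(q-1)\bigr)=0$. The only real obstacle is setting the parametrization up cleanly; once one notices that the disjunction in condition~(2) is vacuous because $c_j=q-1<q$, the problem collapses to a single-parameter sum with no further combinatorial difficulty.
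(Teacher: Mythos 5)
Your proof is correct, and it takes a genuinely different route from the paper. The paper derives Theorem~\ref{thm2ninfq-1} by feeding the data into the general recursion of Theorem~\ref{thmrecur}, observing that most of the summands vanish (the monus terms are zero because the $\overline{q-1}$ column bounds already force the relevant quantities to coincide), and then inducting on $n$ with Theorem~\ref{thmN1} supplying the $m\le 1$ inputs. You instead notice the key structural fact that when every $c_j=q-1$ the disjunction in Definition~\ref{def:genbasis}(2) is automatic, so the count is just the number of $2\times n$ staircase exponent matrices with all column sums $\le q-1$; you then stratify by the corner position $L=\max\{j:p_{2,j}>0\}$ and get $q^n+nq^{n-1}\binom{q}{2}$ directly, which indeed equals $q^{n+1}+(n-2)q^{n-1}\binom{q}{2}$ (your algebra check is right, and the characterization of the two-row staircase condition via $L$ is correct, including that the strata are disjoint and exhaustive). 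What your approach buys is a shorter, self-contained, and arguably more illuminating argument for this particular entry --- it even explains why the answer is more naturally written as $q^n+nq^{n-1}\binom{q}{2}$. What the paper's approach buys is uniformity: the same recursive template handles all eight entries of the table in Section~\ref{sec:2byn}, including those (such as $N_q(2,n;\overline\infty;\overline\infty)$ itself) where no column or row bound trivializes condition~(2) and a direct stratification would be substantially messier; the computation here also serves as a consistency check on Theorem~\ref{thmrecur}, which is the engine for the polynomiality result in Corollary~\ref{cor:poly}.
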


\proof
By Theorems~\ref{thmN1} and \ref{thmrecur},
\begin{align*}
N_q&(2,n;\overline{\infty}; \overline{q-1})
= N_q(2,n-1;\overline \infty; \overline{q-1}) \cr
&\hskip2em
+ \sum_{j=1}^{q-1}
(N_q(1,1;\infty; q-1-j) \dotdiv N_q(1,1;\infty; q-1-j))
\cdot N_q(1,n-1;q-1-j;\overline{q-1}) \cr
&\hskip2em
+ \sum_{i=1}^2 \sum_{j=1}^{q-1}
N_q(2-i,1;\overline \infty; q-1-j)
\cdot (N_q(i,n-1;\overline \infty;\overline{q-1})
\dotdiv
N_q(i,n-1;\overline \infty, q-1 - j;\overline{q-1})) \cr
&\hskip2em
+ \sum_{i=1}^2 \sum_{j=1}^{q-1}
N_q(2-i,1;\overline \infty; q-1-j)
\cdot N_q(i,n-1;\overline{\infty}, q-1 - j;\overline{q-1}) \cr
&= N_q(2,n-1;\overline \infty; \overline{q-1})
+ \sum_{i=1}^2 \sum_{j=1}^{q-1}
N_q(2-i,1;\overline \infty; q-1-j)
\cdot N_q(i,n-1;\overline \infty;\overline{q-1})
\cr
&= N_q(2,n-1;\overline \infty; \overline{q-1})
+ \sum_{j=1}^{q-1}
N_q(1,1;\infty; q-1-j) \cdot N_q(1,n-1;\infty;\overline{q-1}) \cr
&\hskip2em
+ \sum_{j=1}^{q-1}
N_q(0,1;; q-1-j)
\cdot N_q(2,n-1;\overline \infty;\overline{q-1})
\cr
&= q N_q(2,n-1;\overline \infty; \overline{q-1})
+ \sum_{j=1}^{q-1} (q-j) q^{n-1}
\cr
&= q N_q(2,n-1;\overline \infty; \overline{q-1})
+ q^{n-1} {q \choose 2}.
\cr
\end{align*}
When $n = 2$,
this equals
$q {q+1 \choose 2} + q {q \choose 2} = q^3$,
which is exactly the theorem.
If $n > 2$,
then by induction and reduction above,
\begin{align*}
N_q&(2,n;\overline{\infty}; \overline{q-1})
= q N_q(2,n-1;\overline \infty; \overline{q-1})
+ q^{n-1} {q \choose 2}
\cr
&= q \left(q^n + (n-3) q^{n-2} {q \choose 2} \right)
+ 
q^{n-1} {q \choose 2} \cr
&= q^{n+1} + (n-2) q^{n-1} {q \choose 2}.
&\qedbox
\cr
\end{align*}

\begin{thm}\label{thm2ninfrinf}
For $n \ge 2$ and $r < q$,
$N_q(2,n;\infty,r; \overline{\infty})
= (n-1) {r+n \choose n+1} + (r+1) q^n$.
\end{thm}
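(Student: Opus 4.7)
The plan is to induct on $n\geq 2$, reducing the case at $n$ to quantities at $n-1$ via the recursion in Theorem~\ref{thmrecur}, with one-row terms controlled by Theorem~\ref{thmN1}. For the base case $n=2$, I would partition the $2\times 2$ staircase monomials according to whether $p_{2,2}=0$, apply inclusion-exclusion to the disjunction in condition~(2) of Definition~\ref{def:genbasis}, and verify directly that the count equals $q^2(r+1)+\binom{r+2}{3}$, which matches the claim at $n=2$.

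For the inductive step, I would apply Theorem~\ref{thmrecur} with $m=2$, $r_1=\infty$, $r_2=r$, and $c_1=\cdots=c_n=\infty$. Every two-row factor that arises is either $N_q(2,n-1;\infty,r;\infty^{n-1})$ (the recursion's base term), a term $N_q(2,n-1;\infty,r-j;\infty^{n-1})$ for $j=1,\dots,r$ (handled by the inductive hypothesis), or else it sits inside a monus of the form $N_q(2,n-1;\infty,r-j;\overline{q-1})\dotdiv N_q(2,n-1;\infty,q-1-j;\overline{q-1})$. Because $r<q$ forces $r-j\leq q-1-j$, each such monus vanishes, so no ``$\overline{q-1}$''-type theorem from later in the section is needed. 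Every other factor is a one- or zero-row $N_q$ and reduces by Theorem~\ref{thmN1} to one of the simple forms $r+1$, $\min\{r+1,q-j\}$, $q^{n-1}$, $\binom{q-j+n-2}{n-1}$, or $1$.

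Next I would collect the surviving contributions. The two $i=1$ sums in which $\binom{q-j+n-2}{n-1}$ appears with opposite signs cancel, combining to $q^{n-1}\sum_{j=1}^{q-1}\min\{r+1,q-j\}=q^{n-1}[(r+1)(q-1)-\binom{r+1}{2}]$; the sum from the ``first-column sum $\geq q$'' part of the recursion becomes, after the substitution $k=q-j$ and two hockey-stick applications, $\sum_{k=1}^{r}(r+1-k)\binom{k+n-2}{n-1}=\binom{r+n}{n+1}$; and $\sum_{j=1}^{r}N_q(2,n-1;\infty,r-j;\infty^{n-1})$ evaluates, by the inductive hypothesis and the hockey-stick identity $\sum_{s=0}^{r-1}\binom{s+n-1}{n}=\binom{r+n-1}{n+1}$, to $(n-2)\binom{r+n-1}{n+1}+\binom{r+1}{2}q^{n-1}$. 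Combining everything with Pascal's identity $\binom{r+n-1}{n}+\binom{r+n-1}{n+1}=\binom{r+n}{n+1}$ then yields exactly $(n-1)\binom{r+n}{n+1}+(r+1)q^n$.

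The main obstacle is bookkeeping: several groups of terms must be collected, the $q^{n-1}$-weighted pieces must telescope to $(r+1)q^n$, and the binomial pieces must reorganize via Pascal so that the coefficient of $\binom{r+n}{n+1}$ advances from the hypothesis's $n-2$ to the conclusion's $n-1$. All the required identities are standard hockey-stick and Pascal manipulations, presumably among those collected in Appendix~\ref{sec:binform}.
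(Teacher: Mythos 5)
Your proposal is correct and follows essentially the same route as the paper: apply Theorem~\ref{thmrecur} with $m=2$, $r_1=\infty$, $r_2=r$, observe that the two-row monus terms vanish because $r-j\le q-1-j$, evaluate the remaining zero- and one-row factors by Theorem~\ref{thmN1}, and close the induction on $n$ with hockey-stick and Pascal identities. The only cosmetic differences are that the paper also runs the recursion for the base case $n=2$ (reducing to $N_q(2,1;\cdot)$ terms) rather than counting directly, and groups the $\binom{q-1-j+n-1}{n-1}$ contributions slightly differently before invoking Lemma~\ref{lmminr+1q-jbinom}; both routes pass through the same intermediate identity $N_q(2,n;\infty,r;\overline\infty)=\sum_{j=0}^{r}N_q(2,n-1;\infty,r-j;\overline\infty)+(r+1)q^{n}-q^{n-1}\binom{r+2}{2}+\binom{r+n}{n+1}$.
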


\proof
We apply Theorems~\ref{thmN1} and \ref{thmrecur}:
\begin{align*}
N_q&(2,n;\infty,r; \overline{\infty})
= N_q(2,n-1;\infty,r; \overline \infty) \cr
&\hskip2em
+ \sum_{j=1}^{q-1}
(N_q(1,1;r; \infty)
\dotdiv N_q(1,1;r; q-1-j))
\cdot N_q(1,n-1;q-1- j;\overline \infty) \cr
&\hskip2em
+ \sum_{j=1}^{q-1}
N_q(1,1;r; q-1-j)
\cdot (N_q(1,n-1;\infty;\overline {q-1}) \dotdiv
N_q(1,n-1;q-1 - j;\overline{q-1})) \cr
&\hskip2em
+ \sum_{j=1}^r
N_q(0,1;; q-1-j)
\cdot (N_q(2,n-1;\infty, r - j;\overline {q-1})
\dotdiv
N_q(2,n-1;\infty, q-1 - j;\overline{q-1})) \cr
&\hskip2em
+ \sum_{j=1}^{q-1}
N_q(1,1;r; q-1-j) \cdot N_q(1,n-1;q-1-j;\overline \infty)
\cr
&\hskip2em
+ \sum_{j=1}^r
N_q(0,1;; q-1-j)
\cdot N_q(2,n-1;\infty, r- j;\overline \infty)
\cr
&= 
\sum_{j=0}^r N_q(2,n-1;\infty, r- j;\overline \infty)
+ \sum_{j=1}^{q-1}
\left((r+1) - \min\{r+1,q-j\} \right) \cdot {q-1-j+n-1 \choose n-1}
\cr
&\hskip2em
+ \sum_{j=1}^{q-1}
\min\{r+1,q-j\} \cdot \left( q^{n-1} - {q-1-j+n-1 \choose n-1}\right) \cr
&\hskip2em
+ \sum_{j=1}^{q-1} \min\{r+1,q-j\} \cdot {q-1-j+n-1 \choose n-1} \cr
&= 
\sum_{j=0}^r N_q(2,n-1;\infty, r- j;\overline \infty)
%\sum_{j=0}^r \left((n-2) {r-j+n-1 \choose n} + (r-j+1) q^{n-1} \right)
+ \sum_{j=1}^{q-1} (r+1) {q-1-j+n-1 \choose n-1}
\cr
&\hskip2em
+ \sum_{j=1}^{q-1}
\min\{r+1,q-j\} \cdot \left( q^{n-1} - {q-1-j+n-1 \choose n-1}\right) \cr
&= 
\sum_{j=0}^r N_q(2,n-1;\infty, r- j;\overline \infty)
+ (r+1) {q+n-2 \choose n}
\cr
&\hskip2em
+ \left(q (r+1) - {r+2 \choose 2} \right) q^{n-1}
- \left((r+1){q + n - 2 \choose n} - {r + n \choose n+1}\right)
\hbox{ (by Lemma~\ref{lmminr+1q-jbinom})}
\cr
&=
\sum_{j=0}^r N_q(2,n-1;\infty, r- j;\overline \infty)
%(n-2) {r+n \choose n+1}
+ (r+1) q^n
- q^{n-1} {r+2 \choose 2}
+ {r+n \choose n+1}.
\cr
\end{align*}
When $n = 2$,
this equals
$\sum_{j=0}^r (r- j+1)q 
+ (r+1) q^2
- q {r+2 \choose 2}
+ {r+2 \choose 3}$
$= {r +2 \choose 2} q 
+ (r+1) q^2
- q {r+2 \choose 2}
+ {r+2 \choose 3}$
$= (r+1) q^2 + {r+2 \choose 3}$,
which is of the desired form.
If $n > 2$,
then by induction and the reduction above,
\begin{align*}
N_q(2,n;\infty,r; \overline{\infty})
&=
\sum_{j=0}^r \left((n-2) {r-j+n-1 \choose n} + (r-j+1) q^{n-1} \right)
%(n-2) {r+n \choose n+1}
\cr
&\hskip1em
+ (r+1) q^n
- q^{n-1} {r+2 \choose 2}
+ {r+n \choose n+1}
\cr
&=
(n-2) {r+n \choose n+1} + {r+2 \choose 2} q^{n-1}
+ (r+1) q^n - q^{n-1} {r+2 \choose 2} + {r+n \choose n+1}
\cr
&=
(n-1) {r+n \choose n+1}
+ (r+1) q^n. &\qedbox
\cr
\end{align*}

\begin{thm}\label{thm2ninfrq-1}
For $n \ge 2$ and $r < q$,
$N_q(2,n;\infty,r; \overline{q-1})
= (r+1) q^n - {r + n \choose n+1}$.
\end{thm}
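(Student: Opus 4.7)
The plan is to apply Theorem~\ref{thmrecur} to $N_q(2,n;\infty,r;\overline{q-1})$ and reduce it to a recursion in $n$, then induct, following the template of Theorems~\ref{thm2ninfq-1} and \ref{thm2ninfrinf}. After expanding the recursion with $m=2$, $r_1 = \infty$, $r_2 = r$, and all $c_j = q-1$, I would identify the vanishing summands: the second (outer) sum drops out because each term is a $\dotdiv$ of a quantity with itself; and the $i=2$ contribution to the third sum vanishes because, since $r \le q-1$, one has $r-j \le q-1-j$, and monotonicity of $N_q$ in its $r_i$-parameters forces the $\dotdiv$ to be zero.

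Next, I would combine the surviving pieces. The $i=1$ contributions to the third and fourth sums have matching $N_q(1,n-1;q-1-j;\overline{q-1}) = {q-1-j+n-1 \choose n-1}$ factors appearing with opposite signs, so they cancel, leaving just $q^{n-1}\sum_{j=1}^{q-1}\min\{r+1,q-j\}$. The leading ``first column zero'' term together with the $i=2$ contribution to the fourth sum combines into $\sum_{k=0}^{r} N_q(2,n-1;\infty,k;\overline{q-1})$. An elementary evaluation of the $\min$-sum (split at $j = q-r$, or cite the relevant appendix lemma) gives $\sum_{j=1}^{q-1}\min\{r+1,q-j\} = (r+1)q - {r+2 \choose 2}$, so the recursion reduces to
\begin{equation*}
N_q(2,n;\infty,r;\overline{q-1}) = \sum_{k=0}^{r} N_q(2,n-1;\infty,k;\overline{q-1}) + (r+1)q^n - q^{n-1}{r+2 \choose 2}.
\end{equation*}

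Finally, I would induct on $n$. The base case $n=1$ is a direct count of pairs $(p_{1,1},p_{2,1})$ with $p_{2,1} \le r$ and $p_{1,1}+p_{2,1} \le q-1$, yielding $(r+1)q - {r+1 \choose 2}$, which matches the claimed formula. For the inductive step, substituting the hypothesis $N_q(2,n-1;\infty,k;\overline{q-1}) = (k+1)q^{n-1} - {k+n-1 \choose n}$ and using $\sum_{k=0}^{r}(k+1) = {r+2 \choose 2}$ together with the hockey-stick identity $\sum_{k=0}^{r}{k+n-1 \choose n} = {r+n \choose n+1}$ gives $\sum_{k=0}^{r} N_q(2,n-1;\infty,k;\overline{q-1}) = q^{n-1}{r+2 \choose 2} - {r+n \choose n+1}$, and the $q^{n-1}{r+2 \choose 2}$ terms cancel, leaving $(r+1)q^n - {r+n \choose n+1}$. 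The main obstacle is purely bookkeeping: correctly identifying which of the many summands in Theorem~\ref{thmrecur} cancel or telescope, and matching the remainder against standard binomial identities. No new conceptual ingredient beyond what is already available in Theorems~\ref{thmN1}, \ref{thmrecur}, and the preceding theorems of this section is required.
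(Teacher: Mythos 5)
Your proposal is correct and follows essentially the same route as the paper: expand Theorem~\ref{thmrecur}, kill the second sum and the $i=2$ monus term, combine the surviving $i=1$ pieces into $q^{n-1}\sum_{j}\min\{r+1,q-j\}$ via Lemma~\ref{lmminr+1q-jbinom}, and telescope the remaining terms into $\sum_{k=0}^{r}N_q(2,n-1;\infty,k;\overline{q-1})$ before inducting on $n$ with the hockey-stick identity. The only (harmless) deviation is that you anchor the induction at $n=1$, where the formula $(r+1)q-\binom{r+1}{2}$ indeed holds by direct count, whereas the paper verifies the base case $n=2$ explicitly using Lemma~\ref{lmothermin}.
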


\proof
By Theorem~\ref{thmrecur},
\begin{align*}
N_q&(2,n;\infty,r; \overline{q-1})
= 
N_q(2,n-1;\infty,r; \overline{q-1}) \cr
&\hskip2em
+ \sum_{j=1}^{q-1}
N_q(1,1;r; q-1-j) \cdot
(N_q(1,n-1;\infty;\overline{q-1})
\dotdiv
N_q(1,n-1;q-1 - j;\overline{q-1})) \cr
&\hskip2em
+ \sum_{j=1}^r
N_q(0,1;; q-1-j)
\cdot (N_q(2,n-1;\infty, r - j;\overline{q-1})
\dotdiv
N_q(2,n-1;\infty, q-1 - j;\overline {q-1})) \cr
&\hskip2em
+ \sum_{j=1}^{q-1}
N_q(1,1;r; q-1-j) \cdot N_q(1,n-1;q-1- j;\overline{q-1}) \cr
&\hskip2em
+ \sum_{j=1}^r
N_q(0,1;; q-1-j)
\cdot N_q(2,n-1;\infty, r- j;\overline{q-1}) \cr
&= 
\sum_{j=0}^r N_q(2,n-1;\infty, r- j;\overline{q-1})
+ \sum_{j=1}^{q-1} \min\{r+1,q-j\} q^{n-1}
\cr
%&= 
%\sum_{j=0}^r N_q(2,n-1;\infty, r- j;\overline{q-1})
%+ \sum_{j=1}^{q-r-1} (r+1) q^{n-1}
%+ \sum_{j=q-r}^{q-1} (q-j) q^{n-1}
%\cr
%&= 
%\sum_{j=0}^r N_q(2,n-1;\infty, r- j;\overline{q-1})
%+ (q-r-1) (r+1) q^{n-1}
%+ {r+1 \choose 2} q^{n-1}
%\cr
%&= 
%\sum_{j=0}^r N_q(2,n-1;\infty, r- j;\overline{q-1})
%+ (q-(r/2)-1) (r+1) q^{n-1}.
%\cr
&= 
\sum_{j=0}^r N_q(2,n-1;\infty, r- j;\overline{q-1})
+ (r+1)q^n - {r+2 \choose 2} q^{n-1}
\hbox{ (by Lemma~\ref{lmminr+1q-jbinom})}.
\cr
\end{align*}
When $n = 2$,
this is
\begin{align*}
N_q&(2,2;\infty,r; \overline{q-1})
= 
\sum_{j=0}^r N_q(2,1;\infty, r- j;q-1)
+ (r+1)q^2 - {r+2 \choose 2} q
%+ (q-(r/2)-1) (r+1) q
\cr
&= 
\sum_{j=0}^r \sum_{i=0}^{q-1} \min\{q-i,r-j+1\}
+ (r+1)q^2 - {r+2 \choose 2} q
%+ (q-(r/2)-1) (r+1) q
\cr
&= 
q {r+2 \choose 2} - {r+2 \choose 3}
\hbox{ (by Lemma~\ref{lmothermin}) }
+ (r+1)q^2 - {r+2 \choose 2} q
\cr
&= 
(r+1) q^2
- {r+2 \choose 3},
\cr
\end{align*}
which proves the theorem in case $n = 2$.
Now let $n > 2$.
By above and by induction,
\begin{align*}
N_q&(2,n;\infty,r; \overline{q-1})
= 
\sum_{j=0}^r N_q(2,n-1;\infty, r- j;\overline{q-1})
%+ (q-(r/2)-1) (r+1) q^{n-1}
+ (r+1)q^n - {r+2 \choose 2} q^{n-1}
\cr
&= 
\sum_{j=0}^r
\left(
(r-j+1) q^{n-1}
- {r-j+n-1 \choose n}
\right)
+ (r+1)q^n - {r+2 \choose 2} q^{n-1}
\cr
&= 
{r+2 \choose 2} q^{n-1}
- {r+n \choose n+1}
+ (r+1)q^n - {r+2 \choose 2} q^{n-1}
\cr
&= 
(r+1) q^n - {r+n \choose n+1}. &\qedbox
\cr
\end{align*}

\begin{thm}\label{thm:2byn}
The Hilbert-Kunz function 
for the $2 \times n$ generic matrix $X$ with $n \ge 2$~is
$$
\HK_{k[X]/I_2(X),\goth{m}}(q)
= {nq^{n+1} - (n-2)q^n \over 2} +n{n+ q-1 \choose n+1}. 
$$
\end{thm}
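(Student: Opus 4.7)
The plan is to follow exactly the template of the preceding three theorems: apply the master recursion of Theorem~\ref{thmrecur} with $m=2$ and $r_1=r_2=c_1=\cdots=c_n=\infty$, substitute the closed forms from Theorems~\ref{thmN1}, \ref{thm2ninfq-1}, \ref{thm2ninfrinf}, and \ref{thm2ninfrq-1} for every quantity appearing on the right, collapse the resulting sums with standard binomial identities, and induct on $n$.

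When specialized this way, the right-hand side of Theorem~\ref{thmrecur} consists of one ``leading'' term $N_q(2,n-1;\overline\infty;\overline\infty)$ plus three double sums, which split into six contributions (the first sum contributes only $i=1$; each of the other two contributes $i=1,2$). Every factor appearing in the six contributions is one of $N_q(0,1;\,;\cdot)$, $N_q(1,1;\infty;\cdot)$, $N_q(1,n-1;\cdot;\overline\infty)$, $N_q(1,n-1;\cdot;\overline{q-1})$, $N_q(2,n-1;\infty,\cdot;\overline\infty)$, $N_q(2,n-1;\infty,\cdot;\overline{q-1})$, or the constant $N_q(2,n-1;\overline\infty;\overline{q-1})$, each already in closed form; and in every $\dotdiv$-bracket the first argument dominates the second, so the monus acts as ordinary subtraction. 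After substitution, the ``binomial'' corrections from the third sum at $i=1$ cancel against the fourth sum at $i=1$, and the $q^{n-1}\sum_j(q-j)$ contributions between the third and fourth sums at $i=2$ also cancel, leaving only $\sum_{j=1}^{q-1} j\binom{q-j+n-2}{n-1}$ and $\sum_{j=1}^{q-1}\binom{q-j+n-2}{n}$ as surviving binomial sums. Both evaluate via the hockey-stick identity, to $\binom{q+n-1}{n+1}$ and $\binom{q+n-2}{n+1}$ respectively, yielding the recursion
\[
N_q(2,n;\overline\infty;\overline\infty) - N_q(2,n-1;\overline\infty;\overline\infty) = \frac{nq^{n+1} - (2n-3)q^n + (n-3)q^{n-1}}{2} + \binom{q+n-1}{n+1} + (n-1)\binom{q+n-2}{n+1}.
\]

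With $f(n) := \frac{nq^{n+1}-(n-2)q^n}{2} + n\binom{n+q-1}{n+1}$ denoting the claimed closed form, the induction step reduces to checking $f(n)-f(n-1)$ equals the right-hand side of the above display. The $q$-power pieces match directly after expansion, and the binomial pieces reduce to the identity
\[
n\binom{n+q-1}{n+1} - (n-1)\binom{n+q-2}{n} = \binom{q+n-1}{n+1} + (n-1)\binom{q+n-2}{n+1},
\]
which is an immediate application of Pascal's rule $\binom{n+q-1}{n+1} = \binom{n+q-2}{n+1} + \binom{n+q-2}{n}$. The base case $n=2$ follows either by direct enumeration via Theorem~\ref{thm:basis}, or by running the recursion once starting from $N_q(2,1;\overline\infty;\overline\infty) = q^2$ (a value immediate from Theorem~\ref{thmN1} after applying the symmetry in Definition~\ref{def:genbasis}).

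The main obstacle is the bookkeeping in the second paragraph: six subterms, each with a $q$-power piece and a binomial correction produced by the $\dotdiv$, must be combined with the correct signs so that the two key cancellations actually occur. Once those cancellations are in hand, the remaining sums are handled by the hockey-stick identity and Pascal's rule, so no new idea beyond those already used in the three preceding theorems is required.
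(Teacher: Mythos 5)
Your proposal is correct and follows essentially the same route as the paper: specialize the recursion of Theorem~\ref{thmrecur} to $m=2$ with all bounds $\infty$, substitute the closed forms of Theorems~\ref{thmN1}, \ref{thm2ninfq-1}, \ref{thm2ninfrinf}, and \ref{thm2ninfrq-1}, observe the same cancellations, and induct on $n$ with the $n=2$ base case handled separately. Your intermediate identity for $N_q(2,n;\overline\infty;\overline\infty)-N_q(2,n-1;\overline\infty;\overline\infty)$ agrees with the paper's computation, and the Pascal-rule verification of the inductive step is a valid (if slightly repackaged) version of the paper's final simplification.
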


\proof
Recall that
$\HK_{k[X]/I_2(X),\goth{m}}(q)
= N_q(2,n;\overline{\infty}; \overline{\infty})$ by notation. 
We apply Theorems~\ref{thmN1} and \ref{thmrecur}:
\begin{align*}
N_q&(2,n;\overline{\infty}; \overline{\infty})
= 
N_q(2,n-1;\overline \infty; \overline \infty) \cr
&\hskip1em
+ \sum_{j=1}^{q-1}
(N_q(1,1;\infty;\infty ) \dotdiv N_q(1,1;\infty; q-1-j))
\cdot N_q(1,n-1;q-1- j;\overline \infty) \cr
&\hskip1em
+ \sum_{j=1}^{q-1}
N_q(1,1;\infty; q-1-j)
\cdot (N_q(1,n-1;\infty;\overline{q-1})
\dotdiv
N_q(1,n-1;\infty, q-1 - j;\overline{q-1})) \cr
&\hskip1em
+ \sum_{j=1}^{q-1}
N_q(0,1;; q-1-j)
\cdot (N_q(2,n-1;\overline \infty;\overline{q-1})
\dotdiv
N_q(2,n-1;\overline \infty, q-1 - j;\overline{q-1})) \cr
&\hskip1em
+ \sum_{j=1}^{q-1}
N_q(1,1;\infty; q-1-j)
\cdot N_q(1,n-1; \infty, q-1 - j; \overline \infty) \cr
&\hskip1em
+ \sum_{j=1}^{q-1}
N_q(0,1;; q-1-j)
\cdot N_q(2,n-1; \overline \infty, q-1 - j; \overline \infty) \cr
&= 
N_q(2,n-1;\overline \infty; \overline \infty)
+ \sum_{j=1}^{q-1}
N_q(1,1;\infty;\infty ) \cdot N_q(1,n-1;q-1- j;\overline \infty) \cr
&\hskip1em
+ \sum_{j=1}^{q-1}
N_q(1,1;\infty; q-1-j)
\cdot (N_q(1,n-1;\infty;\overline{q-1})
-
N_q(1,n-1;q-1 - j;\overline{q-1}))
\cr
&\hskip1em
+ \sum_{j=1}^{q-1}
(N_q(2,n-1;\overline \infty;\overline{q-1})
-
N_q(2,n-1;\infty, q-1 - j;\overline{q-1}))
+ \sum_{j=1}^{q-1}
N_q(2,n-1; \infty, q-1 - j; \overline \infty) \cr
&=
N_q(2,n-1;\overline \infty; \overline \infty)
+ \sum_{j=1}^{q-1} q \cdot {q - 1 - j + n - 1 \choose n-1}
+ \sum_{j=1}^{q-1}
(q-j) \left(q^{n-1} - {q-1-j+n-1 \choose n-1}\right)
\cr
&\hskip1em
+ \sum_{j=1}^{q-1}
\left(N_q(2,n-1;\overline \infty;\overline{q-1})
-
N_q(2,n-1;\infty, q-1 - j;\overline{q-1})
+ N_q(2,n-1; \infty, q-1 - j; \overline \infty)\right) \cr
&=
N_q(2,n-1;\overline \infty; \overline \infty)
+ q \cdot {q - 1 + n - 1 \choose n}
+ {q \choose 2} q^{n-1} - q \sum_{j=1}^{q-1} {q-1-j+n-1 \choose n-1}
+ \sum_{j=1}^{q-1} j {q-1-j+n-1 \choose n-1}
\cr
&\hskip1em
+ \sum_{j=1}^{q-1}
\left(N_q(2,n-1;\overline \infty;\overline{q-1})
-
N_q(2,n-1;\infty, q-1 - j;\overline{q-1})
+ N_q(2,n-1; \infty, q-1 - j; \overline \infty)\right) \cr
&=
N_q(2,n-1;\overline \infty; \overline \infty)
+ q \cdot {q - 1 + n - 1 \choose n}
+ {q \choose 2} q^{n-1} - q {q-1+n-1 \choose n}
+ {q - 1 + n \choose n+1} \hbox{ (by Lemma~\ref{lemsum})}
\cr
&\hskip1em
+ \sum_{j=1}^{q-1}
\left(N_q(2,n-1;\overline \infty;\overline{q-1})
-
N_q(2,n-1;\infty, q-1 - j;\overline{q-1})
+ 
N_q(2,n-1; \infty, q-1 - j; \overline \infty\right) \cr
&=
N_q(2,n-1;\overline \infty; \overline \infty)
+ {q \choose 2} q^{n-1}
+ {q - 1 + n \choose n+1}
\cr
&\hskip1em
+ \sum_{j=1}^{q-1}
\left(N_q(2,n-1;\overline \infty;\overline{q-1})
-
N_q(2,n-1;\infty, q-1 - j;\overline{q-1})
+ \sum_{j=1}^{q-1}
N_q(2,n-1; \infty, q-1 - j; \overline \infty\right). \cr
\end{align*}
When $n = 2$,
by Theorem~\ref{thmN1} this simplifies to
\begin{align*}
N_q&(2,2;\overline{\infty}; \overline{\infty})
=
q^2
+ {q \choose 2} q
+ {q + 1\choose 3}
+ \sum_{j=1}^{q-1}
\left(
{q-1 + 2 \choose 2} - \sum_{i = 0}^{q-1} \min\{q-i,q-j\}\right)
+ \sum_{j=1}^{q-1} q (q-j)
\cr
&= 
q^2
+ {q \choose 2} q
+ {q +1 \choose 3}
+ (q-1) { q+1 \choose 2}
- 2 {q+1 \choose 3}
\hbox{ (by Lemma \ref{lmq-iq-j})}
+ q {q \choose 2}
\cr
&= 
q^2
+ 2 {q \choose 2} q
+ (q-1) { q+1 \choose 2}
- 3 {q+1 \choose 3}
+ 2 {q +1 \choose 3}
\cr
&= 
{2q^2
+ 2 q^2 (q-1) \over 2}
+ {3 (q-1) (q+1) q
- 3 (q+1) q (q-1)q \over 6}
+ 2 {q +1 \choose 3}
\cr
%&= 
%{2q^2
%+ 2 q^2 (q-1) \over 2}
%+ 2 {q +1 \choose 3}
%\cr
&= 
{2q^3 \over 6}
+ 2 {q +1 \choose 3},
\cr
\end{align*}
which proves the case $n = 2$,
and if $n > 2$,
then by %induction,
Theorems~\ref{thm2ninfq-1}, \ref{thm2ninfrinf}, \ref{thm2ninfrq-1},
by induction,
and the reduction above,
\begin{align*}
N_q&(2,n;\overline{\infty}; \overline{\infty})
=
%N_q(2,n-1;\overline \infty; \overline \infty)
%+ {q \choose 2} q^{n-1}
%+ {q - 1 + n \choose n+1}
%\cr
%&\hskip1em
%+ \sum_{j=1}^{q-1}
%\left(N_q(2,n-1;\overline \infty;\overline{q-1})
%-
%N_q(2,n-1;\infty, q-1 - j;\overline{q-1})
%+ \sum_{j=1}^{q-1}
%N_q(2,n-1; \infty, q-1 - j; \overline \infty\right) \cr
%&=
{(n-1)q^n - (n-3)q^{n-1} \over 2} + (n-1) {n+q-2 \choose n}
+ {q \choose 2} q^{n-1} + {q-1 + n \choose n+1}
\cr
&\hskip1em
+ \sum_{j=1}^{q-1}
\left(
q^n + (n-3) q^{n-2} {q \choose 2}
-
(q-j)q^{n-1} + {q-1-j + n-1 \choose n}
+ (n-2) {q-1-j+n-1 \choose n} + (q-j) q^{n-1}\right)
\cr
&=
{(n-1)q^n - (n-3)q^{n-1} \over 2} + (n-1) {n+q-2 \choose n}
+ {q \choose 2} q^{n-1}
+ {q-1+ n \choose n+1}
\cr
&\hskip1em
+ (q-1) q^n
+ (q-1) (n-3) q^{n-2} {q \choose 2}
+ (n-1) {q+ n-2 \choose n+1}
\cr
&=
{(n-1)q^n - (n-3)q^{n-1} \over 2}
+ (q-1) q^n
+ (q-1) (n-3) q^{n-2} {q \choose 2}
+ {q \choose 2} q^{n-1}
+ (n-1) {n+q-1 \choose n+1}
+ {n+q-1 \choose n+1}
\cr
&=
{(n-1)q^n - (n-3)q^{n-1}
+ 2 q^{n+1}
- 2 q^n
+ (n-3) q^n (q-1)
- (n-3) q^{n-1} (q-1)
+ q^n (q-1) \over 2}
+ n {n+q-1 \choose n+1}
\cr
&=
{nq^{n-1}
- (n-2) q^n \over 2}
+ n {n+q-1 \choose n+1}
&\qedbox
\cr
\end{align*}

\begin{cor} \label{corHKmult}
(c.f.\ \cite[Theorem 3.3]{EY03})
The Hilbert-Kunz multiplicity
for the $2 \times n$ generic matrix $X$ with $n \ge 2$~is
$$
e_{\HK}(k[X]/I_2(X);\goth{m}) = \lim_{q \to \infty} {N_q(2,n;\overline \infty; \overline \infty) \over q^{n+1}}
= {n \over 2} + {n \over (n+1)!}.
\eqno\qedbox
$$
\end{cor}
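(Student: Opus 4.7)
The plan is to read the closed-form expression for $N_q(2,n;\overline\infty;\overline\infty)$ given by Theorem~\ref{thm:2byn} and extract the leading coefficient when dividing by $q^{n+1}$. Since the dimension of $k[X]/I_2(X)$ for the $2 \times n$ generic matrix is $m + n - 1 = n+1$, dividing the Hilbert-Kunz function by $q^{n+1}$ and taking $q \to \infty$ is precisely the definition of $e_{\HK}$ recalled in the introduction.

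Concretely, by Theorem~\ref{thm:2byn},
$$
\frac{N_q(2,n;\overline\infty;\overline\infty)}{q^{n+1}}
= \frac{n}{2} - \frac{n-2}{2q} + \frac{n\binom{n+q-1}{n+1}}{q^{n+1}}.
$$
The first two terms tend to $n/2$ as $q \to \infty$. For the remaining term, I would observe that
$$
\binom{n+q-1}{n+1} = \frac{(q-1)q(q+1)\cdots(q+n-1)}{(n+1)!}
$$
is a polynomial in $q$ of degree $n+1$ with leading coefficient $1/(n+1)!$, so $\binom{n+q-1}{n+1}/q^{n+1} \to 1/(n+1)!$. Multiplying by $n$ yields the second summand $n/(n+1)!$, and adding gives the claimed value.

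There is no real obstacle: the work was done in Theorem~\ref{thm:2byn}, and this corollary is just a limit computation. The only thing worth remarking on is that the answer is manifestly independent of the characteristic of $k$ (as the closed form is), and that it agrees with the $m=2$ specialization of the Eto-Yoshida formula \cite[Theorem 3.3]{EY03} quoted in the introduction, providing a consistency check.
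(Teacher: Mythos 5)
Your proposal is correct and matches the paper's (implicit) argument: the corollary is stated with a \qedbox because it follows immediately from Theorem~\ref{thm:2byn} by exactly the limit computation you describe, with $\binom{n+q-1}{n+1}/q^{n+1} \to 1/(n+1)!$. Nothing further is needed.
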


We pause to use Theorem~\ref{thm:2byn} and Corollary~\ref{corHKmult} to record the 
Hilbert-Kunz functions and corresponding multiplicities 
for specific $n \ge 2$. We also remark that the Hilbert-Kunz function 
$N_q(2,3;\overline \infty; \overline \infty)$ appeared in \cite[pg. 544]{HMM04} without supporting calculation. 
%we get the following explicit Hilbert-Kunz functions
%and corresponding multiplicities:
\begin{align*}
&N_q(2,2;\overline \infty; \overline \infty) = {4q^3 - q \over 3}, 
\quad
e_{\HK} = {4 \over 3}, \cr
&
N_q(2,3;\overline \infty; \overline \infty) = {13 q^4 - 2q^3 - q^2 - 2q \over 8},
\quad
e_{\HK} = {13 \over 8}, \cr
&
N_q(2,4;\overline \infty; \overline \infty) = {61 q^5 - 25q^4 + 5 q^3 - 5 q^2 - 6 q \over 30},
\quad
e_{\HK} = {61 \over 30}, \cr
&
N_q(2,5;\overline \infty; \overline \infty) = {361 q^6 - 207 q^5 + 25q^4 + 15 q^3 - 26 q^2 - 24 q \over 144},
\quad
e_{\HK} = {361 \over 144}.
\end{align*}

%Thus we just finished the main goal of this paper,
%namely to compute the Hilbert-Kunz function
%of the ring $k[X]/I_2(X)$ when $X$ is a $2 \times n$ matrix of indeterminates.
To generalize Theorem~\ref{thm:2byn}, the recursion in Theorem~\ref{thmrecur}
applied to larger cases forces other cases of $N_q(2,n; r_1, r_2; c_1, \ldots, c_n)$ to be computed.
The rest of this section proves two such necessary cases.

\begin{thm}\label{thm2q-1rq-1}
For all $r < q$,
$N_q(2,n;q-1,r; \overline{q-1})
= 
q{r+n \choose n}
+ \sum_{i=1}^{n-1}
{q+i-1 \choose i+1} {r+n-i \choose n-i}
- n {r+n \choose n+1}$.
\end{thm}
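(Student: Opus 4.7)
The plan is to apply the recursion of Theorem~\ref{thmrecur} with $m=2$, $r_1=q-1$, $r_2=r$, and $c_1=\cdots=c_n=q-1$, then induct on $n$. Write $g(n,r):=N_q(2,n;q-1,r;\overline{q-1})$.

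Most of the summands produced by Theorem~\ref{thmrecur} vanish. The first sum is zero because with $r_1=q-1$ the two factors $N_q(1,1;r;c_1-j)$ and $N_q(1,1;r;q-1-j)$ on either side of the $\dotdiv$ coincide. In the second sum, the $i=1$ contribution vanishes for the same reason (both sides of the $\dotdiv$ collapse to $N_q(1,n-1;q-1-j;\overline{q-1})$, using $r_1-j=q-1-j$), and the $i=2$ contribution vanishes because $r-j\le q-1-j$ makes the first $N_q$ argument of the $\dotdiv$ no larger than the second. Evaluating the two surviving terms of the third sum by Theorem~\ref{thmN1} gives
$$g(n,r) = g(n-1,r) + \sum_{j=1}^{q-1}\min\{r+1,q-j\}\binom{q+n-2-j}{n-1} + \sum_{s=0}^{r-1} g(n-1,s).$$
By Lemma~\ref{lmminr+1q-jbinom}, the middle sum equals $(r+1)\binom{q+n-2}{n}-\binom{r+n}{n+1}$.

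For the base case $n=1$, the definition directly yields $g(1,r)=(r+1)q-\binom{r+1}{2}$, which matches the claimed formula (the indexed sum is empty). For the inductive step, assume the formula for $n-1$ and all $s<q$. Apply the hockey-stick identity $\sum_{s=0}^{r-1}\binom{s+k}{k}=\binom{r+k}{k+1}$ termwise to evaluate $\sum_{s=0}^{r-1}g(n-1,s)$, and combine with $g(n-1,r)$ via Pascal's identity in the three forms
$\binom{r+n-1}{n-1}+\binom{r+n-1}{n}=\binom{r+n}{n}$,
$\binom{r+n-1-i}{n-1-i}+\binom{r+n-1-i}{n-i}=\binom{r+n-i}{n-i}$, and
$\binom{r+n-1}{n}+\binom{r+n-1}{n+1}=\binom{r+n}{n+1}$.
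These collapse $g(n-1,r)+\sum_{s=0}^{r-1}g(n-1,s)$ to
$$q\binom{r+n}{n}+\sum_{i=1}^{n-2}\binom{q+i-1}{i+1}\binom{r+n-i}{n-i}-(n-1)\binom{r+n}{n+1}.$$
Finally, the middle term $(r+1)\binom{q+n-2}{n}-\binom{r+n}{n+1}$ supplies both the missing $i=n-1$ summand (since $\binom{q+n-2}{n}\binom{r+1}{1}=(r+1)\binom{q+n-2}{n}$) and the extra $-\binom{r+n}{n+1}$ needed to upgrade the coefficient from $-(n-1)$ to $-n$, producing $g(n,r)$ in the desired form.

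The main obstacle is bookkeeping. Identifying the four vanishing blocks in Theorem~\ref{thmrecur} under the constraints $r_1=q-1$ and $c_j=q-1$ is the crux; once they are excised, the remainder is a routine application of Pascal, the hockey-stick identity, and the binomial lemma from the appendix. No new positivity or inequality argument is required.
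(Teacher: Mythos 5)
Your proof is correct and follows essentially the same route as the paper: the same application of Theorem~\ref{thmrecur} with the same identification of the vanishing monus blocks, the reduction to $\sum_{s=0}^{r} N_q(2,n-1;q-1,s;\overline{q-1}) + (r+1)\binom{q+n-2}{n} - \binom{r+n}{n+1}$ via Lemma~\ref{lmminr+1q-jbinom}, and the same induction on $n$ with hockey-stick summation. The only (immaterial) difference is that you anchor the induction at $n=1$ while the paper starts at $n=2$ and evaluates the $N_q(2,1;\cdot)$ terms directly via Lemma~\ref{lmothermin}.
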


\proof
By Theorem~\ref{thmrecur},
\begin{align*}
N_q&(2,n;q-1,r; \overline {q-1})
= 
N_q(2,n-1;q-1,r; \overline {q-1}) \cr
&\hskip2em
+ \sum_{j=1}^{q-1}
(N_q(1,1;r; q-1-j) \dotdiv N_q(1,1;r; q-1-j))
\cdot N_q(1,n-1;q-1- j;\overline {q-1}) \cr
&\hskip2em
+ \sum_{i=1}^2 \sum_{j=1}^{r_i}
N_q(2-i,1;\overline{q-1}, r; q-1-j) \cr
&\hskip6em
\cdot (N_q(i,n-1;r_1, \ldots, r_{i-1}, r_i - j;\overline {q-1})
\dotdiv
N_q(i,n-1;r_1, \ldots, r_{i-1}, q-1 - j;\overline {q-1})) \cr
&\hskip2em
+ \sum_{i=1}^2 \sum_{j=1}^{r_i}
N_q(2-i,1;\overline{q-1}, r; q-1-j)
\cdot N_q(i,n-1;r_1, \ldots, r_{i-1}, r_i - j;\overline {q-1}) \cr
&= 
N_q(2,n-1;q-1,r; \overline {q-1}) \cr
&\hskip2em
+ \sum_{i=1}^2 \sum_{j=1}^{r_i}
N_q(2-i,1;\overline{q-1}, r; q-1-j)
\cdot N_q(i,n-1;r_1, \ldots, r_{i-1}, r_i - j;\overline {q-1}) \cr
&= 
N_q(2,n-1;q-1,r; \overline {q-1}) \cr
&\hskip2em
+ \sum_{j=1}^{q-1}
N_q(1,1;r; q-1-j) \cdot N_q(1,n-1;q-1 - j;\overline {q-1}) \cr
&\hskip2em
+ \sum_{j=1}^r
N_q(0,1;; q-1-j)
\cdot N_q(2,n-1;q-1, r - j;\overline {q-1}) \cr
&= 
N_q(2,n-1;q-1,r; \overline {q-1}) \cr
&\hskip2em
+ \sum_{j=1}^{q-1}
\min\{r+1,q-j\} {q-1-j+n-1 \choose n-1}
+ \sum_{j=1}^r
N_q(2,n-1;q-1, r - j;\overline {q-1}) \cr
&= 
\sum_{j=0}^r N_q(2,n-1;q-1, r - j;\overline {q-1})
+ (r+1) {q+n-2 \choose n}
- {r + n \choose n+1}
\hbox{ (by Lemma~\ref{lmminr+1q-jbinom})}.
\cr
\end{align*}
When $n = 2$,
this equals
\begin{align*}
N_q&(2,2;q-1,r; \overline {q-1})
= 
\sum_{j=0}^r N_q(2,1;q-1, r - j;\overline {q-1})
+ (r+1) {q \choose 2} - {r+2 \choose 3}
\cr
&= 
\sum_{j=0}^r \sum_{i=0}^{q-1} \min\{q-i,r-j+1\} 
+ (r+1) {q \choose 2} - {r+2 \choose 3}
\cr
&= 
q {r + 2 \choose 2} - {r + 2 \choose 3} \hbox{ (by Lemma~\ref{lmothermin}) }
+ (r+1) {q \choose 2} - {r+2 \choose 3}
\cr
&= 
q {r + 2 \choose 2}
+ (r+1) {q \choose 2}
- 2 {r + 2 \choose 3}, 
\cr
\end{align*}
which fits the pattern,
and for $n > 2$,
\begin{align*}
N_q&(2,n;q-1,r; \overline {q-1})
= 
\sum_{j=0}^r N_q(2,n-1;q-1, r - j;\overline {q-1})
+ (r+1) {q+n-2 \choose n}
- {r + n \choose n+1}
\cr
&= 
\sum_{j=0}^r
\left(
q {r-j+n-1 \choose n-1}
+ \sum_{i=1}^{n-2}
{q + i-1 \choose i+1} {r-j+n-1-i \choose n-1-i}
- (n-1) {r-j + n-1 \choose n}
\right) \cr
&\hskip6em
+ (r+1) {q+n-2 \choose n}
- {r + n \choose n+1}
\cr
&= 
q {r+n \choose n}
+ \sum_{i=1}^{n-2} {q + i-1 \choose i+1} {r+n-i \choose n-i}
- (n-1) {r + n \choose n+1}
+ (r+1) {q+n-2 \choose n}
- {r + n \choose n+1}
\cr
&= 
q {r+n \choose n}
+ \sum_{i=1}^{n-1} {q + i-1 \choose i+1} {r+n-i \choose n-i}
- n {r + n \choose n+1}. &\qedbox
\cr
\end{align*}

\begin{thm}\label{thm2q-1rinfty}
For all $r < q$,
$N_q(2,n;q-1,r; \overline{\infty})
= q {r+n \choose n}
+ \sum_{i=1}^{n-1} {q+i-1 \choose i+1} {r+n-i \choose n-i}$.
\end{thm}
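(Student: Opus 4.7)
The plan is to mirror the proof of Theorem~\ref{thm2q-1rq-1}: unfold Theorem~\ref{thmrecur} with $m=2$, $r_1=q-1$, $r_2=r$, $c_1=\cdots=c_n=\infty$; use structural features of the arguments to collapse most of the six groups of terms; reduce to a clean one-step recursion in $n$; and finish by induction on $n$, with the base case $n=2$ verified separately from Theorem~\ref{thmN1}.

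The simplifications I expect are as follows. In the ``middle'' family of sums of Theorem~\ref{thmrecur}, the $i=1$ contribution has $\dotdiv$ bracket of the form $N_q(1,n-1;q-1-j;\overline{q-1}) \dotdiv N_q(1,n-1;q-1-j;\overline{q-1})$ and so vanishes. The $i=2$ contribution has $\dotdiv$ bracket $N_q(2,n-1;q-1,r-j;\overline{q-1}) \dotdiv N_q(2,n-1;q-1,q-1-j;\overline{q-1})$; since $r<q$ gives $r-j \le q-1-j$ and $N_q$ is monotone in each row bound, this also vanishes. The ``first'' sum of Theorem~\ref{thmrecur} gives $\sum_{j=1}^{q-1}((r+1)-\min\{r+1,q-j\}){q-1-j+n-1 \choose n-1}$, and the $i=1$ piece of the ``third'' sum gives $\sum_{j=1}^{q-1}\min\{r+1,q-j\}{q-1-j+n-1 \choose n-1}$; these pair up and collapse via the hockey-stick identity to $(r+1){q+n-2 \choose n}$.

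What remains is the clean recursion
$$
N_q(2,n;q-1,r;\overline\infty) = \sum_{j=0}^{r} N_q(2,n-1;q-1,r-j;\overline\infty) + (r+1){q+n-2 \choose n}.
$$
For $n=2$, Theorem~\ref{thmN1} yields $N_q(2,1;q-1,r-j;\overline\infty) = q(r-j+1)$ (no staircase constraint in a single column, both row sums are automatically below $q$), and summation over $j$ produces $q{r+2 \choose 2}+(r+1){q \choose 2}$, matching the claimed closed form. For the inductive step I substitute the inductive hypothesis into the sum and apply the hockey-stick identity $\sum_{j=0}^{r}{r-j+k \choose k} = {r+k+1 \choose k+1}$ termwise: the leading $q$-weighted term produces $q{r+n \choose n}$, the indices $i=1,\ldots,n-2$ yield ${q+i-1 \choose i+1}{r+n-i \choose n-i}$, and the extra $(r+1){q+n-2 \choose n} = {r+1 \choose 1}{q+(n-1)-1 \choose (n-1)+1}$ supplies precisely the missing $i=n-1$ term.

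The main obstacle is really only bookkeeping; the conceptual step is observing the two $\dotdiv$ cancellations, particularly the $i=2$ one, which relies crucially on the hypothesis $r<q$ and on monotonicity of $N_q$ in its row-bound arguments. Without these vanishings one would inherit a messy inner recursion into $N_q(2,n-1;q-1,\cdot;\overline{q-1})$; with them, the recursion decouples into a single-variable descent on $n$ that is handled entirely by hockey-stick.
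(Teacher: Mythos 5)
Your proposal is correct and follows essentially the same route as the paper's proof: the same unfolding of Theorem~\ref{thmrecur}, the same two $\dotdiv$ cancellations, the same pairing of the first sum with the $i=1$ piece of the last sum to get $(r+1)\binom{q+n-2}{n}$, and the same reduction to $N_q(2,n;q-1,r;\overline\infty)=\sum_{j=0}^{r}N_q(2,n-1;q-1,r-j;\overline\infty)+(r+1)\binom{q+n-2}{n}$ followed by induction on $n$ via the hockey-stick identity. No gaps.
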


\proof
By Theorem~\ref{thmrecur},
\begin{align*}
N_q&(2,n;q-1,r; \overline {\infty})
= 
N_q(2,n-1;q-1,r; \overline {\infty}) \cr
&\hskip2em
+ \sum_{j=1}^{q-1}
(N_q(1,1;r; \infty) \dotdiv N_q(1,1;r; q-1-j))
\cdot N_q(1,n-1;q-1- j;\overline {\infty}) \cr
&\hskip2em
+ \sum_{i=1}^2 \sum_{j=1}^{r_i}
N_q(2-i,1;\overline{q-1}, r; q-1-j) \cr
&\hskip6em
\cdot (N_q(i,n-1;r_1, \ldots, r_{i-1}, r_i - j;\overline {q-1})
\dotdiv
N_q(i,n-1;r_1, \ldots, r_{i-1}, q-1 - j;\overline {q-1})) \cr
&\hskip2em
+ \sum_{i=1}^2 \sum_{j=1}^{r_i}
N_q(2-i,1;\overline{q-1}, r; q-1-j)
\cdot N_q(i,n-1;r_1, \ldots, r_{i-1}, r_i - j;\overline {\infty}) \cr
&= 
N_q(2,n-1;q-1,r; \overline {\infty}) \cr
&\hskip2em
+ \sum_{j=1}^{q-1}
(N_q(1,1;r; \infty) - N_q(1,1;r; q-1-j))
\cdot N_q(1,n-1;q-1- j;\overline {\infty}) \cr
&\hskip2em
+ \sum_{j=1}^{q-1}
N_q(1,1;r; q-1-j)
\cdot N_q(1,n-1;q-1- j;\overline {\infty}) \cr
&\hskip2em
+ \sum_{j=1}^r
N_q(0,1;; q-1-j)
\cdot N_q(2,n-1;q-1,r - j;\overline {\infty}) \cr
%&\hskip2em
%+ \sum_{i=1}^2 \sum_{j=1}^{r_i}
%N_q(2-i,1;\overline{q-1}, r; q-1-j)
%\cdot N_q(i,n-1;r_1, \ldots, r_{i-1}, r_i - j;\overline {\infty}) \cr
&= 
\sum_{j=0}^r
N_q(2,n-1;q-1,r - j;\overline {\infty})
+ \sum_{j=1}^{q-1}
N_q(1,1;r; \infty) 
\cdot N_q(1,n-1;q-1- j;\overline {\infty}) \cr
&= 
\sum_{j=0}^r
N_q(2,n-1;q-1,r - j;\overline {\infty})
+ \sum_{j=1}^{q-1}
(r+1) \cdot {q-1-j + n-1 \choose n-1} \cr
&= 
\sum_{j=0}^r
N_q(2,n-1;q-1,r - j;\overline {\infty})
+ (r+1) \cdot {q+ n-2 \choose n}. \cr
\end{align*}
When $n = 2$,
this equals
\begin{align*}
N_q&(2,2;q-1,r; \overline {\infty})
= 
\sum_{j=0}^r N_q(2,1;q-1,r - j;\overline {\infty})
+ (r+1) \cdot {q \choose 2} \cr
&= 
q \sum_{j=0}^r (r-j+1)
+ (r+1) \cdot {q \choose 2}
= q {r+2 \choose 2} + (r+1) \cdot {q \choose 2}, \cr
\end{align*}
which fits the pattern,
and for $n > 2$,
\begin{align*}
N_q&(2,n;q-1,r; \overline {\infty})
= 
\sum_{j=0}^r N_q(2,n-1;q-1, r - j;\overline {\infty})
+ (r+1) {q+n-2 \choose n}
\cr
&= 
\sum_{j=0}^r
\left(
q {r-j+n-1 \choose n-1}
+ \sum_{i=1}^{n-2}
{q + i-1 \choose i+1} {r-j+n-1-i \choose n-1-i}
\right)
+ (r+1) {q+n-2 \choose n}
\cr
&= 
q {r+n \choose n}
+ \sum_{i=1}^{n-2} {q + i-1 \choose i+1} {r+n-i \choose n-i}
+ (r+1) {q+n-2 \choose n}
\cr
&= 
q {r+n \choose n}
+ \sum_{i=1}^{n-1} {q + i-1 \choose i+1} {r+n-i \choose n-i}. &\qedbox
\cr
\end{align*}

%LEAVE:
%
%\begin{align*}
%N_q&(2,n;r_1,r_2; c_1, \ldots, c_n)
%= 
%N_q(2,n-1;r_1,r_2; c_2, \ldots, c_n) \cr
%&\hskip2em
%+ \sum_{j=1}^{\min\{r_1,q-1\}}
%(N_q(1,1;r_2; c_1-j) \dotdiv N_q(1,1;r_2; q-1-j))\cr
%&\hskip6em
%\cdot N_q(2,n-1;\min\{r_1,q-1\}, \ldots, \min\{r_{i-1},q-1\}, \min\{r_i,q-1\} - j;c_2,\ldots, c_n) \cr
%&\hskip2em
%+ \sum_{i=1}^2 \sum_{j=1}^{\min\{r_i,q-1\}}
%N_q(2-i,1;r_{i+1}, \ldots, r_2; \min\{c_1,q-1\}-j) \cr
%&\hskip6em
%\cdot (N_q(i,n-1;r_1, \ldots, r_{i-1}, r_i - j;\min\{c_2,q-1\},\ldots, \min\{c_n,q-1\}) \cr
%&\hskip9em
%\dotdiv
%N_q(i,n-1;r_1, \ldots, r_{i-1}, q-1 - j;\min\{c_2,q-1\},\ldots, \min\{c_n,q-1\})) \cr
%&\hskip2em
%+ \sum_{i=1}^2 \sum_{j=1}^{\min\{r_i,q-1\}}
%N_q(2-i,1;r_{i+1}, \ldots, r_2; \min\{c_1,q-1\}-j) \cr
%&\hskip6em
%\cdot N_q(i,n-1;r_1, \ldots, r_{i-1}, \min\{r_i,q-1\} - j;c_2,\ldots, c_n). \cr
%\end{align*}
%

%HERE; to be deleted eventually, but it helps me to check every once in a while:
%\begin{align*}
%&N_q(2,2;\overline \infty; \overline \infty) = {4q^3 - q \over 3},  \cr
%&
%N_q(2,3;\overline \infty; \overline \infty) = {13 q^4 - 2q^3 - q^2 - 2q \over 8}, \cr
%&
%N_q(2,4;\overline \infty; \overline \infty) = {61 q^5 - 25q^4 + 5 q^3 - 5 q^2 - 6 q \over 30}, \cr
%&
%N_q(2,5;\overline \infty; \overline \infty) = {361 q^6 - 207 q^5 + 25q^4 + 15 q^3 - 26 q^2 - 24 q \over 144}.
%\end{align*}

\begin{thm}\label{thmNFsign}
For all $r < q$,
$N_q(2,n;\infty,r; \infty, \overline{q-1}) = q^n (r+1)$.
\end{thm}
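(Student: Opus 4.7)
The plan is to exploit the bijection pointed out in Remark~\ref{rmk:EY}: sending a staircase monomial $\prod x_{i,j}^{p_{i,j}}$ in the $2 \times n$ case to the Segre product pair $(z_1^a z_2^b,\; y_1^{c_1}\cdots y_n^{c_n})$ with $a = \sum_j p_{1,j}$, $b = \sum_j p_{2,j}$, and $c_j = p_{1,j} + p_{2,j}$ gives a bijection between the staircase monomial basis of Theorem~\ref{thm:basis} and tuples $(a,b;c_1,\ldots,c_n)$ of non-negative integers with $a + b = c_1 + \cdots + c_n$; injectivity follows because a staircase matrix with prescribed row and column sums is uniquely recovered by greedily filling row $2$ from the left. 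Under this correspondence $N_q(2,n;\infty,r;\infty,\overline{q-1})$ equals the number of such tuples satisfying $b \le r$, $c_j \le q-1$ for $j \ge 2$, and---since $r < q$ and the $c_j$ with $j \ge 2$ are already bounded by $q-1$---the disjunctive condition from Definition~\ref{def:genbasis}(2) collapses to ``$a \le q-1$ or $c_1 \le q-1$.'' I then reparameterize by $(b,c_2,\ldots,c_n)$ together with the common degree $d := a+b = c_1 + \cdots + c_n$, so that $a = d-b$ and $c_1 = d - S$, where $S := c_2 + \cdots + c_n$; the conditions $a, c_1 \ge 0$ force $d \ge \max(b,S)$, while the disjunctive condition becomes $d \in I_b \cup I_S$, where $I_x := \{x, x+1, \ldots, x+q-1\}$.

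The heart of the argument is the observation that for any $b, S \ge 0$ the set $(I_b \cup I_S) \cap [\max(b,S),\infty)$ has exactly $q$ elements. Indeed, by symmetry assume $b \le S$: then $I_S$ lies entirely in $[S,\infty)$ and already contributes $q$ integers, while $I_b \cap [S,\infty) \subseteq [S,\; b+q-1] \subseteq [S,\; S+q-1] = I_S$, so the union collapses to $I_S$. Summing this count over the $(r+1)\cdot q^{n-1}$ choices of $(b; c_2, \ldots, c_n)$ gives
\[
N_q(2,n;\infty,r;\infty,\overline{q-1}) \;=\; (r+1)\cdot q^{n-1}\cdot q \;=\; q^n(r+1).
\]

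The main subtlety I expect is setting up the Segre bijection carefully and correctly translating Definition~\ref{def:genbasis}(2) into the $I_b \cup I_S$ membership condition on $d$; once that is in place, the ``always exactly $q$ valid degrees'' phenomenon makes the count immediate and, in contrast with Theorems~\ref{thm2ninfq-1}--\ref{thm2q-1rinfty}, bypasses an induction on $n$ via the recursion of Theorem~\ref{thmrecur}.
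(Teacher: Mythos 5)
Your proof is correct, but it takes a genuinely different route from the paper's. The paper establishes this identity by feeding the data into the recursion of Theorem~\ref{thmrecur}, simplifying the monus terms, and inducting on $n$, with the inductive step resting on Theorem~\ref{thm2ninfrq-1} and the binomial identities of Lemmas~\ref{lmminr+1q-jbinom} and~\ref{lmothermin}. You instead count directly through the row-sum/column-sum correspondence of Remark~\ref{rmk:EY}: once one checks that a $2\times n$ staircase matrix is uniquely determined by, and can always be built from, its row and column sums (your greedy filling of the second row --- do write out both the uniqueness and the existence halves, since the paper states the correspondence in Remark~\ref{rmk:EY} without proving bijectivity), the hypotheses $b\le r<q$ and $c_j\le q-1$ for $j\ge 2$ make the disjunction in Definition~\ref{def:genbasis}(2) collapse to a single window condition on the common degree $d$, namely $\max\{b,S\}\le d\le \max\{b,S\}+q-1$, which admits exactly $q$ values regardless of $(b,S)$; multiplying by the $(r+1)q^{n-1}$ choices of $(b;c_2,\ldots,c_n)$ gives $q^n(r+1)$. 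Your argument is shorter, bypasses the induction on $n$ and all binomial bookkeeping, and makes transparent why the answer is so clean. What the paper's route buys is uniformity: the same recursive machine produces every entry of the table in Section~\ref{sec:2byn}, including cases such as Theorems~\ref{thm2q-1rq-1} and~\ref{thm2q-1rinfty} where both row bounds are finite, the disjunctive condition does not reduce to a single interval of degrees, and a direct count in your style would be considerably messier.
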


\proof
By Theorem~\ref{thmrecur},
\begin{align*}
N_q&(2,n;\infty,r; \infty, \overline{q-1})
= 
N_q(2,n-1;\infty,r; \overline{q-1}) \cr
&
+ \sum_{j=1}^{q-1}
(N_q(1,1;r; \infty) \dotdiv N_q(1,1;r; q-1-j))
\cdot N_q(1,n-1;q-1-j; \overline{q-1}) \cr
&
+ \sum_{j=1}^{q-1}
N_q(1,1;r; q-1-j)
\cdot (N_q(1,n-1;\infty;\overline{q-1})
\dotdiv
N_q(1,n-1;q-1 - j;\overline{q-1})) \cr
&
+ \sum_{j=1}^{r}
N_q(0,1;; q-1-j)
\cdot (N_q(2,n-1;\infty, r - j;\overline{q-1})
\dotdiv
N_q(2,n-1;\infty, q-1 - j;\overline{q-1})) \cr
&
+ \sum_{j=1}^{q-1}
N_q(1,1;r; q-1-j) \cdot N_q(1,n-1;q-1 - j;\overline{q-1}) \cr
&
+ \sum_{j=1}^{r}
N_q(0,1;; q-1-j) \cdot N_q(2,n-1;\infty, r - j;\overline{q-1}) \cr
&= 
\sum_{j=0}^{r} N_q(2,n-1;\infty, r - j;\overline{q-1}) \cr
&
+ \sum_{j=1}^{q-1}
N_q(1,1;r; \infty) \cdot N_q(1,n-1;q-1-j; \overline{q-1}) \cr
&
+ \sum_{j=1}^{q-1}
N_q(1,1;r; q-1-j)
\cdot (N_q(1,n-1;\infty;\overline{q-1})
-
N_q(1,n-1;q-1 - j;\overline{q-1})) \cr
&= 
\sum_{j=0}^{r} N_q(2,n-1;\infty, r - j;\overline{q-1})
+ \sum_{j=1}^{q-1}
(r+1) \cdot {q-1-j + n-1 \choose n-1} \cr
&
+ \sum_{j=1}^{q-1}
\min\{r+1,q-j\} 
\cdot \left(q^{n-1}
-
 {q-1-j + n-1 \choose n-1}\right)
\hbox{  (by Theorem~\ref{thmN1})} \cr
&= 
\sum_{j=0}^{r} N_q(2,n-1;\infty, r - j;\overline{q-1})
+ (r+1) \cdot {q-2 + n \choose n} \cr
&
+ 
q^{n-1} \left( q (r+1) - {r+2 \choose 2} \right)
-
(r+1) {q+n-2 \choose n} + {r+n \choose n+1}
\hbox{  (by Lemma~\ref{lmminr+1q-jbinom})} \cr
&= 
\sum_{j=0}^{r} N_q(2,n-1;\infty, r - j;\overline{q-1})
+ q^n (r+1) - q^{n-1} {r+2 \choose 2} + {r+n \choose n+1}.
\cr
\end{align*}
When $n = 2$,
by Theorem~\ref{thmN1} and Lemma~\ref{lmothermin},
this equals
\begin{align*}
N_q&(2,2;\infty,r; \infty, q-1)
= 
\sum_{j=0}^{r} \sum_{i=0}^{q-1} \min\{q-i, r-j+1\}
+ q^2 (r+1) - q {r+2 \choose 2} + {r+2 \choose 3}
\cr
&= 
q{r+2 \choose 2} - {r+2 \choose 3}
+ q^2 (r+1) - q {r+2 \choose 2} + {r+2 \choose 3}
\cr
&= 
q^2 (r+1),
\cr
\end{align*}
and when $n > 2$,
by Theorem~\ref{thm2ninfrq-1}
this equals
\begin{align*}
N_q&(2,2;\infty,r; \infty, q-1)
= 
\sum_{j=0}^{r} \left((r-j+1) q^{n-1} - {r-j+n-1 \choose n}\right)
+ q^n (r+1) - q^{n-1} {r+2 \choose 2} + {r+n \choose n+1}
\cr
&= 
{r+2 \choose 2} q^{n-1} - {r+n \choose n+1}
+ q^n (r+1) - q^{n-1} {r+2 \choose 2} + {r+n \choose n+1}
\cr
&= 
q^n (r+1),
\cr
\end{align*}
which proves the theorem.
\qed

\appendix
\section{Some binomial formulas}\label{sec:binform}

This section contains some binomial formulas used in the main calculations.
The reader may want to read this section only as needed.

\begin{lem}\label{lemsum}
For any $q\ge 1$,
$$
\sum_{j=0}^{q-1} j {j+n-1 \choose n-1}
= n {q+n-1 \choose n+1}
\hbox{ and }
\sum_{j=1}^q j {q-j+n-1 \choose n-1}
= {q+n \choose n+1}.
$$
\end{lem}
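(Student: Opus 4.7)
The plan is to handle the two identities in sequence, using the first to shortcut the second. Both reduce to standard manipulations (absorption and hockey-stick) so no essential obstacle is expected; the main care needed is in tracking the index shifts.

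For the first identity, I would begin with the absorption identity
$$
j \binom{j+n-1}{n-1} = n \binom{j+n-1}{n},
$$
which one verifies by writing out both sides as ratios of factorials (both equal $(j+n-1)!/((n-1)!(j-1)!)$ for $j \ge 1$, and both vanish at $j=0$). Then
$$
\sum_{j=0}^{q-1} j \binom{j+n-1}{n-1} = n \sum_{j=1}^{q-1} \binom{j+n-1}{n} = n \sum_{k=n}^{q+n-2} \binom{k}{n},
$$
and the hockey-stick identity $\sum_{k=n}^{M} \binom{k}{n} = \binom{M+1}{n+1}$ (with $M=q+n-2$) produces $n \binom{q+n-1}{n+1}$, which is the desired right-hand side.

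For the second identity, I would change variables $k = q - j$ so that
$$
\sum_{j=1}^{q} j \binom{q-j+n-1}{n-1} = \sum_{k=0}^{q-1} (q-k) \binom{k+n-1}{n-1} = q \sum_{k=0}^{q-1} \binom{k+n-1}{n-1} - \sum_{k=0}^{q-1} k \binom{k+n-1}{n-1}.
$$
The first sum is $\binom{q+n-1}{n}$ by hockey-stick, and the second sum is exactly $n\binom{q+n-1}{n+1}$ by the first part of the lemma. It therefore remains to verify the algebraic identity
$$
q\binom{q+n-1}{n} - n\binom{q+n-1}{n+1} = \binom{q+n}{n+1}.
$$
Using Pascal's rule $\binom{q+n}{n+1} = \binom{q+n-1}{n+1} + \binom{q+n-1}{n}$, this reduces to $(q-1)\binom{q+n-1}{n} = (n+1)\binom{q+n-1}{n+1}$, which is a one-line check by factorials (both equal $(q+n-1)!/(n!(q-2)!)$). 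This closes the argument.

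The only hazards are bookkeeping: making sure the ranges of summation after reindexing are correct, and handling the boundary terms $j=0$ or $q=1$ cleanly (for $q=1$ both sides of each identity are $0$, so these degenerate cases hold trivially). No deeper obstacle arises.
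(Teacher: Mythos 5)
Your proof is correct and follows essentially the same route as the paper's: absorption plus hockey-stick for the first identity, then the substitution $k=q-j$ and the first identity for the second, finishing with a factorial/Pascal computation. (One trivial slip in your aside: at $q=1$ both sides of the \emph{second} identity equal $1$, not $0$, but your main argument already covers that case.)
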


\proof Note
$\sum_{j=0}^{q-1} j {j+n-1 \choose n-1}
= \sum_{j=0}^{q-1} j {(j+n-1)! \over (n-1)! j!}
= n \sum_{j=0}^{q-1} {(j+n-1)! \over n! (j-1)!}
= n \sum_{j=0}^{q-1} {j+n-1 \choose n}
= n {q+n-1 \choose n+1}$,
so that
\begin{align*}
\sum_{j=1}^q j {q-j+n-1 \choose n-1}
&= \sum_{k=0}^{q-1} (q-k) \cdot {k+n-1 \choose n-1} \cr
&= q \sum_{j=0}^{q-1} {j+n-1 \choose n-1}
- \sum_{j=0}^{q-1} j {j+n-1 \choose n-1} \cr
&= q {q+n-1 \choose n} - n {q+n-1 \choose n+1} \cr
&= q {(q-1+n)! \over n! (q-1)!}
- n {(q-1+n)! \over (n+1)! (q-2)!} \cr
&= q (n+1) {(q-1+n)! \over (n+1)! (q-1)!}
- n (q-1) {(q-1+n)! \over (n+1)! (q-1)!} \cr
&= (n+q) {(q-1+n)! \over (n+1)! (q-1)!} = {q + n \choose n + 1}. &\qedbox \cr
%&= {q+n \choose n+1}.  \cr
\end{align*}

\begin{lem}\label{lmminr+1q-jbinom}
For all $r < q$, we have the following equalities
\begin{align*}
\sum_{j=1}^{q-1} \min\{r+1,q-j\}
&=
(q-1)(r+1) - {r+1 \choose 2}
= q (r+1) - {r+2 \choose 2},
\cr
\sum_{j=1}^{q-1} \min\{r+1,q-j\} &\cdot {q-1-j+n-1 \choose n-1}
= (r+1) {q+n-2 \choose n} - {r + n \choose n+1},
\cr
\sum_{j=1}^{q-1} \min\{r+1,q-j\} &\cdot {q-1-j+n-1 \choose n}
\cr
&= 
(r+1) {q+n-2 \choose n+1} - (r-1) {r+n-1 \choose n+1}
+ (n+1) {r+n-1 \choose n+2}.
\cr
\end{align*}
\end{lem}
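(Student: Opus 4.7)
The plan is to treat all three identities by a common strategy: partition each sum at the threshold $j_0 = q-r-1$ where $\min\{r+1,q-j\}$ changes value, then evaluate the two resulting pieces via the hockey-stick identity $\sum_{k=0}^{m}\binom{k+n-1}{n-1} = \binom{m+n}{n}$ together with the weighted variant $k\binom{k+n-1}{n-1} = n\binom{k+n-1}{n}$ (the latter is immediate from factorials). For $1 \le j \le j_0$ the minimum is the constant $r+1$, while for $j_0+1 \le j \le q-1$ it equals $q-j$, and in the latter range $q-j$ runs through $r, r-1, \ldots, 1$.

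The first identity reduces after the split to $(q-r-1)(r+1) + \sum_{k=1}^{r} k = (q-r-1)(r+1) + \binom{r+1}{2}$, and the two claimed forms then follow by elementary algebra together with the Pascal relation $\binom{r+2}{2} = \binom{r+1}{2} + (r+1)$.

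For the second identity I reindex via $k = q-1-j$, rewriting the sum as $(r+1)\sum_{k=r}^{q-2}\binom{k+n-1}{n-1} + \sum_{k=0}^{r-1}(k+1)\binom{k+n-1}{n-1}$. Hockey-stick turns the first piece into $(r+1)\bigl[\binom{q+n-2}{n} - \binom{r+n-1}{n}\bigr]$, and the second piece decomposes into an unweighted hockey-stick sum $\binom{r+n-1}{n}$ and a weighted sum that the weighted identity reduces to $n\binom{r+n-1}{n+1}$. Assembling the pieces, the only nontrivial thing left to verify is the algebraic identity $r\binom{r+n-1}{n} - n\binom{r+n-1}{n+1} = \binom{r+n}{n+1}$, which is a one-line factorial calculation.

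The third identity follows the same recipe with $\binom{k+n-1}{n}$ in place of $\binom{k+n-1}{n-1}$, so that hockey-stick now produces binomials of the form $\binom{\cdot}{n+1}$ and the weighted identity produces $\binom{\cdot}{n+2}$. The main obstacle here will be combinatorial bookkeeping rather than any genuinely new idea: the claimed right-hand side has three distinct binomial terms, so after the split and the two identity applications, the resulting expression must be massaged by Pascal's rule (perhaps several times) until the canonical form with $(r+1)\binom{q+n-2}{n+1}$, $-(r-1)\binom{r+n-1}{n+1}$, and $(n+1)\binom{r+n-1}{n+2}$ emerges.
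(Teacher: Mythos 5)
Your proposal is correct and follows essentially the same route as the paper's proof: both split each sum at the threshold $j_0=q-r-1$, reindex so that hockey-stick applies, and handle the weighted pieces via the identity $k\binom{k+n-1}{n-1}=n\binom{k+n-1}{n}$ (which is exactly how the paper's Lemma~\ref{lemsum} is proved), finishing with the same one-line factorial check $r\binom{r+n-1}{n}-n\binom{r+n-1}{n+1}=\binom{r+n}{n+1}$ and, for the third identity, the same rewriting of the weight as $(k-1)+2$ before applying the two identities.
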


\proof By direct computation
\begin{align*}
\sum_{j=1}^{q-1}
&\min\{r+1,q-j\} =
\sum_{j=1}^{q-r-1} (r+1) + \sum_{j=q-r}^{q-1} (q-j)
= (q-r-1)(r+1) + {r+1 \choose 2} \cr
&= (q-1)(r+1) - {r+1 \choose 2},
\cr
\sum_{j=1}^{q-1}
&\min\{r+1,q-j\} \cdot {q-1-j+n-1 \choose n-1} \cr
&=
\sum_{j=1}^{q-r-1}
(r+1) \cdot {q-1-j+n-1 \choose n-1}
+ \sum_{j=q-r}^{q-1}
(q-j) \cdot {q-1-j+n-1 \choose n-1} \cr
&= 
(r+1) \left({q+n-2 \choose n} - {r+n-1 \choose n}\right)
\cr
&\hskip4em
+ \sum_{j=q-r}^{q-1} (q-1-j) {q-1-j+n-1 \choose n-1}
+ \sum_{j=q-r}^{q-1} {q-1-j+n-1 \choose n-1}
\cr
&= 
(r+1) \left({q+n-2 \choose n} - {r+n-1 \choose n}\right)
+ \sum_{k=0}^{r-1} k {k+n-1 \choose n-1}
+ {r+n-1 \choose n}
\cr
&=
(r+1) {q+n-2 \choose n} - r  {r+n-1 \choose n}
+ n {r+n-1 \choose n+1}
\hbox{ (by Lemma~\ref{lemsum})}
\cr
&=
(r+1) {q+n-2 \choose n}
- {r + n \choose n+1},
\cr
\sum_{j=1}^{q-1} &\min\{r+1,q-j\} \cdot {q-1-j+n-1 \choose n}
\cr
&=
\sum_{j=1}^{q-r-1}
(r+1) \cdot {q-1-j+n-1 \choose n}
+ \sum_{j=q-r}^{q-1}
(q-j) \cdot {q-1-j+n-1 \choose n}
\cr
&=
(r+1) \left(
{q+n-2 \choose n+1}
- {r+n-1 \choose n+1}
\right)
+ \sum_{j=q-r}^{q-2}
(q-j) \cdot {q-2-j+n \choose n}
\cr
\cr
&=
(r+1) {q+n-2 \choose n+1} - (r+1) {r+n-1 \choose n+1}
\cr
&\hskip2em
+ \sum_{j=q-r}^{q-2}
(q-2-j) \cdot {q-2-j+n \choose n}
+2 \sum_{j=q-r}^{q-2} {q-2-j+n \choose n}
\cr
&=
(r+1) {q+n-2 \choose n+1} - (r+1) {r+n-1 \choose n+1}
+ \sum_{k=0}^{r-2}
k \cdot {k+n \choose n}
+2 {r-1+n \choose n+1}
\cr
&=
(r+1) {q+n-2 \choose n+1} - (r-1) {r+n-1 \choose n+1}
+ (n+1) {r+n-1 \choose n+2} \hbox{ (by Lemma~\ref{lemsum})}.
&\qedbox \cr
\end{align*}

\begin{lem}\label{lmq-iq-j}
For all integers $q \ge 0$,
$\displaystyle \sum_{j=1}^{q-1} \sum_{i=0}^{q-1} \min\{q-i,q-j\}
= 2 {q +1 \choose 3}$.
\end{lem}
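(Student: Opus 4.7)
The plan is to evaluate the double sum by the standard ``layer cake'' identity $\min\{a,b\} = \#\{k \in \mathbf{Z}_{\ge 1} : k \le a \text{ and } k \le b\}$, valid for nonnegative integers $a,b$. This lets us rewrite the sum as
\[
\sum_{j=1}^{q-1}\sum_{i=0}^{q-1}\min\{q-i,q-j\}
= \sum_{k\ge 1}\, \#\{i\in\{0,\ldots,q-1\} : q-i\ge k\}\cdot\#\{j\in\{1,\ldots,q-1\}: q-j\ge k\},
\]
after interchanging the order of summation.

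Next I would evaluate each of the two counts in closed form. For $1\le k\le q$, the condition $q-i\ge k$ on $i\in\{0,\ldots,q-1\}$ picks out $q-k+1$ values, while for $1\le k\le q-1$ the condition $q-j\ge k$ on $j\in\{1,\ldots,q-1\}$ picks out $q-k$ values; both counts vanish for larger $k$. Substituting $m=q-k$ reduces the $k$-sum to
\[
\sum_{k=1}^{q-1}(q-k+1)(q-k) = \sum_{m=1}^{q-1} m(m+1).
\]

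Finally, using the elementary formulas $\sum_{m=1}^{q-1} m = \binom{q}{2}$ and $\sum_{m=1}^{q-1} m^2 = \tfrac{(q-1)q(2q-1)}{6}$, this simplifies to $\tfrac{(q-1)q(q+1)}{3}$, which equals $2\binom{q+1}{3}$ as required. There is no substantive obstacle: the only slightly delicate point is getting the index ranges right in the two indicator counts (in particular, that $j$ starts at $1$ while $i$ starts at $0$), so one should verify the base case $q=2$ as a sanity check. An equally viable alternative is to fix $j$, split the inner sum at $i=j$, and evaluate $\sum_{i=0}^{q-1}\min\{q-i,q-j\} = (j+1)(q-j)+\binom{q-j}{2}$ directly; summing over $j$ and simplifying gives the same answer.
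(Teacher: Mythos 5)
Your argument is correct, and it takes a genuinely different route from the paper's. The paper proves the identity by fixing $j$ and splitting the inner sum at $i=j$ according to which argument achieves the minimum, giving $\sum_{j=1}^{q-1} j(q-j) + \sum_{j=1}^{q-1}\binom{q-j+1}{2}$, and then evaluating each piece as $\binom{q+1}{3}$ via the hockey-stick-type identities of Lemma~\ref{lemsum} (this is exactly the ``equally viable alternative'' you mention at the end). Your main route instead uses the layer-cake identity $\min\{a,b\}=\sum_{k\ge 1}[k\le a][k\le b]$, which factors the double sum over the level $k$ into a product of two counts and collapses everything to the single sum $\sum_{m=1}^{q-1} m(m+1) = 2\binom{q+1}{3}$. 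Your index bookkeeping is right: the $i$-count is $q-k+1$ and the $j$-count is $q-k$ because $j$ starts at $1$, and both arguments $q-i$, $q-j$ are positive integers on the given ranges, so the layer-cake identity applies; the degenerate cases $q\le 1$ are empty sums on both sides. What your approach buys is independence from the auxiliary binomial lemmas and a cleaner reduction to one sum; what the paper's approach buys is uniformity with the surrounding computations, which repeatedly split $\min$-sums the same way and reuse Lemma~\ref{lemsum}. Either proof is acceptable.
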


\begin{proof} By direct computation
$\sum_{j=1}^{q-1} \sum_{i=0}^{q-1} \min\{q-i,q-j\}
= 
\sum_{j=1}^{q-1} \sum_{i=0}^{j-1} (q-j)
+ \sum_{j=1}^{q-1} \sum_{i=j}^{q-1} (q-i)
= 
\sum_{j=1}^{q-1} j {q-1-j+1 \choose 1}
+ \sum_{j=1}^{q-1} {q-j+1 \choose 2}
= 
{q+1 \choose 3} + {q+1 \choose 3}$.
\end{proof}

\begin{lem}\label{lmothermin}
For all $r < q$,
$\sum_{i=0}^r \sum_{j=1}^{q-1} \min\{r-i+1,q-j\}
= q {r+2 \choose 2} - {r+3 \choose 3}$,
and $\sum_{i=0}^r \sum_{j=0}^{q-1} \min\{r-i+1,q-j\}
= q {r+2 \choose 2} - {r+2 \choose 3}.$
%\begin{align*}
%&\sum_{i=0}^r \sum_{j=1}^{q-1} \min\{r-i+1,q-j\}
%= q {r+2 \choose 2} - {r+3 \choose 3},
%\cr
%&\sum_{i=0}^r \sum_{j=0}^{q-1} \min\{r-i+1,q-j\}
%= q {r+2 \choose 2} - {r+2 \choose 3}.
%\cr
%\end{align*}
\end{lem}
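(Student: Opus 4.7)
The plan is to reduce both identities to the already-proved formula $\sum_{j=1}^{q-1}\min\{r+1,q-j\}=q(r+1)-\binom{r+2}{2}$ from Lemma~\ref{lmminr+1q-jbinom} and then collapse the outer sum in $i$ using the hockey-stick identity. Since this is purely a bookkeeping lemma, there is no serious obstacle; the only care needed is to verify that the hypothesis $r<q$ survives the substitution $r\mapsto r-i$ and to track the indexing carefully.

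First I would handle the inner $j$-sum. For every $0\le i\le r$ the hypothesis $r<q$ gives $r-i<q$, so Lemma~\ref{lmminr+1q-jbinom} applied with $r$ replaced by $r-i$ yields
$$
\sum_{j=1}^{q-1}\min\{r-i+1,q-j\}=q(r-i+1)-\binom{r-i+2}{2}.
$$
Summing this over $i=0,\ldots,r$ and reindexing by $k=r-i+1$ converts $\sum_{i=0}^{r}(r-i+1)$ into $\sum_{k=1}^{r+1}k=\binom{r+2}{2}$ and $\sum_{i=0}^{r}\binom{r-i+2}{2}$ into $\sum_{k=1}^{r+1}\binom{k+1}{2}=\sum_{m=2}^{r+2}\binom{m}{2}=\binom{r+3}{3}$ by the hockey-stick identity. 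This establishes the first equality
$$
\sum_{i=0}^{r}\sum_{j=1}^{q-1}\min\{r-i+1,q-j\}=q\binom{r+2}{2}-\binom{r+3}{3}.
$$

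For the second identity I would split off the $j=0$ term. Because $r-i+1\le r+1\le q$, the $j=0$ summand equals $r-i+1$, so its contribution across $i=0,\ldots,r$ is exactly $\binom{r+2}{2}$. Adding this to the first identity gives $(q+1)\binom{r+2}{2}-\binom{r+3}{3}$, and a single application of Pascal's rule $\binom{r+3}{3}=\binom{r+2}{3}+\binom{r+2}{2}$ rewrites this as $q\binom{r+2}{2}-\binom{r+2}{3}$, which is the stated form.
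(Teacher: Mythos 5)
Your proof is correct and follows essentially the same route as the paper: apply the first identity of Lemma~\ref{lmminr+1q-jbinom} with $r$ replaced by $r-i$, collapse the outer sum via the hockey-stick identity, and obtain the second identity by adding the $j=0$ contributions, which total $\binom{r+2}{2}$. The only cosmetic difference is that the paper absorbs $\binom{r+2}{2}-\binom{r+3}{3}=-\binom{r+2}{3}$ directly rather than citing Pascal's rule explicitly.
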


\proof
By Lemma~\ref{lmminr+1q-jbinom},
\begin{align*}
\sum_{i=0}^r &\sum_{j=1}^{q-1} \min\{r-i+1,q-j\}
= \sum_{i=0}^r \left( q (r-i+1) - {r-i+2 \choose 2}\right)
= q {r+2 \choose 2} - {r+3 \choose 3},
\cr
\sum_{i=0}^r &\sum_{j=0}^{q-1} \min\{r-i+1,q-j\}
= q {r+2 \choose 2} - {r+3 \choose 3} + \sum_{i=0}^r (r-i+1) \cr
&= q {r+2 \choose 2} - {r+3 \choose 3} + {r+2 \choose 2}
= q {r+2 \choose 2} - {r+2 \choose 3}.
& \qedbox
\cr
\end{align*}


\begin{thebibliography}{10}

%\bibitem{AL}
%I. Aberbach and G. J. Leuschke,
%{\em The F-signature and strong F-regularity},
%{\it Math. Res. Lett.} {\bf 10} (2003), 395--404.

\bibitem{BCP97}
R. O. Buchweitz, Q. Chen and K. Pardue,
{\em Hilbert-Kunz functions},
Algebraic Geometry e-print series, Feb. 4, 1997. 

\bibitem{CGG90}
L. Caniglia, J. A. Guccione and J. J. Guccione,
{\em Ideals of generic minors},
{\it Comm. Alg.}, {\bf 18} (1990), 2633--2640.

\bibitem{Con96}
A. Conca,
{\em Hilbert-Kunz functions of monomial ideals and binomial hypersurfaces}, Man. Math., {\bf 90}, (1996), 287--300.

\bibitem{Eto02}
K. Eto,
{\em Multiplicity and Hilbert-Kunz Multiplicity of Monoid Rings},
Tokyo J. Math., {\bf 25}, 2, (2002), 241--245.

\bibitem{EY03}
K. Eto and K-i. Yoshida,
{\em Notes on Hilbert-Kunz multiplicity of Rees algebras}, Comm. Algebra, {\bf 31}, (2003), 5943--5976. 

\bibitem{FT03} N. Fakhruddin and V. Trivedi, {\em  Hilbert-Kunz functions and multiplicities for full flag varieties and elliptic curves}, J. Pure. Appl. Alg., {\bf 181}, (2003), 23--52. 

%\bibitem{GS}
%D. Glassbrenner and K. E. Smith,
%{\em Singularities of certain ladder determinantal varieties},
%{\it J. Pure and Appl. Algebra} {\bf 101} (1995), 59--75.


\bibitem{HM83} C. Han and P. Monsky, {\em Some surprising Hilbert-Kunz functions}, Math. Z., {\bf 214}, (1983), 119--135.

\bibitem{HY} M. Hochster and Y. Yao, {\em Second coefficients of Hilbert-Kunz functions for domains}, preprint. 

\bibitem{Hun96} C. Huneke, {\em Tight closure and its applications}, with an appendix by Melvin Hochster, CBMS Regional Conference Series in Mathematics, 88, 1996.

\bibitem{HMM04} C. Huneke, M. McDermott, and P. Monsky, {\em Hilbert-Kunz Functions for Normal Rings}, Math. Res. Lett., {\bf 11}, (2004), 4, 539--546. 

\bibitem{Kr07} M. Kreuzer, {\em Computing Hilbert-Kunz functions of $1$-dimensional graded rings}, Uni. Iagel. Acta. Math., {\bf 45}, (2007), 81--95. 

\bibitem{Kun76} E. Kunz, {\em  On noetherian rings of characteristic $p$}, Am. J. Math, {\bf 98}, (1976), 999--1013.

\bibitem{Mon83} P. Monsky, {\em The Hilbert-Kunz function}, Math. Ann., {\bf 263}, (1983), 
43--49. 

\bibitem{Mon08} P. Monsky, {\em Rationality of Hilbert-Kunz multiplicities: a likely counterexample}, Michigan Math. J., {\bf 57}, (2008), 605--623, Special volume in honor of Melvin Hochster. 

\bibitem{Mon09} P. Monsky, {\em Transcendence of some Hilbert-Kunz multiplicities (modulo a conjecture)}, arXiv:0908.0971.

\bibitem{Mon} P. Monsky, {\em Hilbert-Kunz theory for nodal cubics, via sheaves}, arXiv:1012.2354. 

\bibitem{MT06} P. Monsky and P. Teixeria, {\em $p$-Fractals and power series-II: Some applications to Hilbert-Kunz theory}, J. Alg., {\bf 304}, (2006), 237--255.

\bibitem{ST} K. Schwede and K. Tucker, {\em A survey of test ideals}, to appear in the proceedings volume Prog. in Comm. Alg., de Gruyter. 

\bibitem{Tei02} P. Teixeria, {\em $p$-Fractals and Hilbert-Kunz Series}, Ph. D. Thesis, Brandeis Univ., 2002. 

\bibitem{Wa}
K.-i. Watanabe,
Hilbert-Kunz multiplicity of toric rings,
{\it The Inst. of Natural Sciences Nihon Univ.},
{\it Proc. of the Inst. of Natural Sciences} {\bf 35} (2000), 173--177.

\bibitem{WY04} K-i. Watanabe and K.-i. Yoshida, {\em Minimal relative Hilbert-Kunz multiplicity}, Illinois J. Math, {\bf 428}, (2004), 273--294.

%\bibitem{Yo}
%K.-i. Yoshida,
%Small Hilbert-Kunz multiplicity and $(A_1)$-type singularity,
%in Proceedings of the 4th Japan-Vietnam Joint Seminar of CA
%by and for young mathematicians,
%February 17-21, 2009, Ikuta Campus, Meiji University.
%MIMS (Meiji Institute for Advanced Study of Mathematical Sciences), 2009.
%283--295.

\end{thebibliography}
\end{document}